\theoremstyle{plain}
\newtheorem{theorem}{Theorem}[section]
\newtheorem{lemma}[theorem]{Lemma}
\newtheorem{proposition}[theorem]{Proposition}
\theoremstyle{definition}
\newtheorem{definition}[theorem]{Definition}
\newtheorem{definitions}[theorem]{Definitions}
\newtheorem{example}[theorem]{Example}
\newtheorem{notation}[theorem]{Notation}
\newtheorem{remark}[theorem]{Remark}
\newtheorem{remarks}[theorem]{Remarks}
\newtheorem{review}[theorem]{Review}
\numberwithin{equation}{theorem}
\numberwithin{figure}{theorem}
\newcommand{\abs}[1]{\vert#1\vert} 
\newcommand{\gen}[1]{\langle#1\rangle}
\DeclareMathOperator{\Out}{Out}
\DeclareMathOperator{\Aut}{Aut}
\DeclareMathOperator{\Hom}{Hom}
\def \M {\mathcal{M}}
\def \AM {\mathcal{AM}}
\def \G {\mathcal{G}}
\def \naturals{\mathbb{N}}
\def \integers {\mathbb{Z}}
\def \reals{\mathbb{R}}
\def \complexes{\mathbb{C}}
\def\d1{\discretionary{-}{}{-}}
\def\coloneq{{\colon}\!\!\!\!=}
\begin{document}

\title{Embedding mapping-class groups of orientable surfaces with one boundary component}

\author
{Llu\'is Bacardit\footnote{The research was
funded by Conseil R\'{e}gional de Bourgogne and the MIC (Spain) through
Project MTM2008-01550.} }

\date{}

\maketitle


 \bigskip

\begin{abstract} Let $S_{g,1,p}$ be an orientable surface of genus $g$ with one boundary component and $p$ punctures. Let $\M_{g,1,p}$ be the mapping-class group of $S_{g,1,p}$ relative to the boundary. We construct homomorphisms $\M_{g,1,p} \to \M_{g',1,(b-1)}$, where $g' \geq 0$ and $b\geq 1$. We proof that the constructed homomorphisms $\M_{g,1,p} \to \M_{g',1,(b-1)}$ are injective. One of these embeddings for $g = 0$ is classic.
\bigskip

{\footnotesize
\noindent \emph{{\normalfont 2000}\,Mathematics Subject Classification.} Primary: 20F34;
Secondary: 20E05, 20E36, 57M99.

\noindent \emph{Key words.} Mapping-class group. Automorphisms of free groups. Ordering. Ends of groups.}
\end{abstract}

\section{General Notation}

Let $\naturals$ denote the set of finite cardinals, $\{0,1,2,\ldots\}$.\medskip

Throughout, we fix elements $g,p$ of $\naturals$.\medskip

Given two sets $A$ and $B$, we denote by $A\vee B$ the disjoint union of $A$ and $B$.\medskip

Let $G$ be a multiplicative group. For elements $a$, $b$ of  $G$,  we write $\overline a \coloneq a^{-1}$,   $a^b \coloneq \overline b a b$,
 $[a]\coloneq \{a^g\mid g \in G\}$,  the conjugacy class of $a$ in $G$.
We let~$\Aut(G)$ denote the group of all automorphisms of $G$,
acting on $G$ on the right with exponent notation.\medskip

An {\it ordering} of a set will mean a {\it total} ordering for the set.\medskip

We will make frequent use of sequences, usually with vector notation.
We shall use the language of sequences to introduce
indexed symbols and to realize free monoids.
Formally, we define a  {\it sequence} as a set endowed with a specified listing of its elements.
Thus a sequence has an underlying set; with vector notation, the coordinates are the elements of
(the underlying set of) the sequence.
For two sequences $A$, $B$,  their  concatenation will be denoted~$A \vee B$.
By a sequence $A$ {\it in} a given set $X$,
we mean a sequence endowed with a specified map of sets $A \to X$; to avoid extra notation,
we shall use the same symbol to denote an element of $A$ and its image in $X$ even
when the map is not injective. We often treat $A$ as an element in the free monoid on $X$
with concatenation as binary operation, and then the elements of $A$ are its atomic factors.

Let $i$, $j \in \integers$. We write 

$[i{\uparrow}j]\coloneq
 \begin{cases}
(i,i+1,\ldots, j-1, j) \in \integers^{j-i+1}   &\text{if $i \le j$,}\\
() \in \integers^0 &\text{if $i > j$.}
\end{cases}
$ \newline
Also, $[i{\uparrow}\infty[\,\, \coloneq (i,i+1,i+2,\ldots)$.
We define $[j{\downarrow}i]$ to be the reverse of the sequence $[i{\uparrow}j]$,
$(j,j-1,\ldots,i+1,i)$.

Let $v$ be a symbol. For each $k \in \integers$, we let $v_k$ denote the ordered pair
$(v,k)$. We let 

$v_{[i{\uparrow}j]}\coloneq  \begin{cases}
(v_i,v_{i+1},   \cdots, v_{j-1},   v_j)  &\text{if $i \le j$,}\\
()  &\text{if $i > j$.}
\end{cases}$ \newline
Also, $v_{[i{\uparrow}\infty[\,} \coloneq (v_i,v_{i+1},v_{i+2},\ldots)$.
We define $v_{[j{\downarrow}i]}$ to be the reverse of the sequence~$v_{[i{\uparrow}j]}$.

Suppose there is specified a set-map $v_{[i{\uparrow}j]} \to A$. We treat the
elements of $v_{[i{\uparrow}j]}$ as elements of $A$ (possibly with repetitions), and, we say 
that $v_{[i{\uparrow}j]}$ is a {\it sequence in} $A$.

If $v_{[i{\uparrow}j]}$ is a sequence in a multiplicative group $G$, we let
\begin{align*}\Pi v_{[i{\uparrow}j]}&\coloneq  \begin{cases}
v_i   v_{i+1}   \cdots v_{j-1}   v_j \in G   &\text{if $i \le j$,}\\
1 \in G &\text{if $i > j$.}
\end{cases}\\ \Pi v_{[j{\downarrow}i]} &\coloneq  \begin{cases}
v_j   v_{j-1}   \cdots v_{i+1}   v_i \in G   &\text{if $j \ge i$,}\\
1 \in G &\text{if $j< i $.}
\end{cases}\end{align*}

Let $F$ be the free group on a set $X$. 
Consider an element $w$ of $F$ and a sequence $a_{[1{\uparrow}k]}$ in
$X \vee \overline X$.  
If $\Pi a_{[1{\uparrow}k]} = w$ in $F$,
we say that  $a_{[1{\uparrow}k]}$ is a monoid {\it expression} for~$w$ in
$X\vee \overline X$ of {\it length} $k$.
We say that  $a_{[1{\uparrow}k]}$ is  {\it reduced} if, for all $j \in [1{\uparrow}(k-1)]$,
$a_{j+1}\ne \overline a_j$ in $X\vee\overline X$.
Each element of $F$ has a unique reduced expression, called the {\it normal form}. Suppose that  $a_{[1{\uparrow}k]}$
is the normal form for $w$.  We define the {\it length} of $w$ to be $\abs{w}\coloneq k$.

For $p$ an element of $\naturals\cup \{\infty\},\, p\neq 0$, let $C_p$ be the {\it cyclic group of order} $p$ with multiplicative notation. For $q\in \naturals$, let $C_{p}^{\ast q}$ denote the free product of $q$ copies of $C_p$.

\section{Introduction and main results}
\label{sec:intro}

Recall $g,p$ are elements of $\naturals$. Let $b$ be an element of $\naturals$, $b\geq 1$. Let $S_{g,b,p}$ be an orientable surface of genus $g$ with $b$ boundary components and $p$ punctures.

Let $\Sigma_{g,b,p}$ be the rank $2g+b-1+p$ free group with generating set $x_{[1{\uparrow}g]}\vee y_{[1{\uparrow}g]}\vee z_{[1{\uparrow}(b-1)]}\vee t_{[1{\uparrow}p]}$. We view $\Sigma_{g,b,p}$ as a presentation of $\pi_1(S_{g,b,p}, \ast)$, the fundamental group of $S_{g,b,p}$ based at a point $\ast$ in the $b$-th boundary component. In addition, for every $l \in [1{\uparrow}(b-1)]$, $z_l$ represents a loop around the $l$-th boundary component; for every $k \in [1{\uparrow}p]$, $t_k$ represents a loop around the $k$-th puncture, and $(\Pi_{i\in [1{\uparrow}g]} [x_i, y_i] \Pi z_{[1{\uparrow}(b-1)]}\Pi t_{[1{\uparrow}p]})^{-1}$ represents a loop around the $b$-th boundary component. Note that, if $p = 0$, there is no puncture in $S_{g,b,p} = S_{g,b,0}$, $t_{[1{\uparrow}p]}$ is the empty sequence, and $\Pi t_{[1{\uparrow}p]} = 1$.

Let $\AM_{g,b,p}$ denote the subgroup of $\Aut(\Sigma_{g,b,p} \ast \langle e_{[1{\uparrow}(b-1)]} \mid \,\rangle)$ consisting of all the automorphisms of $\Sigma_{g,b,p} \ast \langle e_{[1{\uparrow}(b-1)]} \mid \,\rangle$ which map $\Sigma_{g,b,p}$ to itself and respect the sets $$\{\Pi_{i\in [1{\uparrow}g]} [x_i, y_i] \Pi z_{[1{\uparrow}(b-1)]} \Pi t_{[1{\uparrow}p]}\},\,\, \{\overline z_1^{\,e_1}\},\, \{\overline z_2^{\,e_2}\}, \ldots  ,\, \{\overline z_{(b-1)}^{\,e_{(b-1)}}\},\,\, \{[\,\overline t_k]\}_{k\in [1{\uparrow}p]}.$$

We call $\AM_{g,b,p}$ the {\it algebraic mapping-class group} of 
a surface of genus $g$ with $b$ boundary components and $p$ punctures, $S_{g,b,p}$.\medskip

For $b = 0$, the (orientation-preserving) mapping-class group of $S_{g,0,p}$, denoted $\M_{g,0,p}$, is defined as the group of orientation-preserving homeomorphisms of $S_{g,0,p}$ modulo isotopy. Let $f$ be a homeomorphism of $S_{g,0,b}$, then $f$ induces an automorphism of $\pi_1(S_{g,0,p})$ which respects the set of conjugacy classes of $t_{[1{\uparrow}p]}$. Since $f$ is not forced to fix the base point of $\pi_1(S_{g,0,p})$, the isotopy class of $f$ defines an automorphism of $\pi_1(S_{g,0,p})$ up to conjugation, hence an element of $\Out(\pi_1(S_{g,0,p}))$. By the Dehn-Nielsen-Baer theorem, this correspondence is an isomorphism onto the subgroup of $\Out(\pi_1(S_{g,0,p}))$ which respects the set of conjugacy classes of $t_{[1{\uparrow}p]}$, \cite[Theorem 3.6]{FM}, \cite[Theorem 2.9.A]{Ivanov}. In particular, $\M_{g,0,p} \leq \Out(\pi_1(S))$.

If $b \geq 1$, that is, $S_{g,b,p}$ has non-empty boundary; we restrict ourselves to homeomorphisms and isotopies of $S_{g,b,p}$ which fix the boundary pointwise. These homeomorphisms preserve the orientation of $S_{g,b,p}$. In this case, we take the base point of $S_{g,b,p}$ in the $b$-th boundary component of $S_{g,b,p}$. We convert a boundary component of $S_{g,b,p}$ into a puncture by identifying via a homeomorphism the boundary component with the boundary of a once punctured disc. If $b\geq 1$, by converting all the boundary components into punctures we can deduce $\M_{g,b,p} \simeq \AM_{g,b,p}$ from the Dehn-Nielsen-Baer theorem, \cite[Theorem 9.6]{DF05}.
See~\cite{DF05} for a background on algebraic mapping-class groups, with some changes of notation. From now on, we will deal with $\AM_{g,b,p}$ and, mostly, in the case $b=1$.\medskip

For $p \geq 1$ and $q\in [1{\uparrow}p]$, we denote by $\AM_{g,b,q\perp (p-q)}$ the subgroup of $\AM_{g,1,p}$ consisting of all the automorphisms which respect the sets $\{[\,\overline t_k]\}_{k\in[1{\uparrow}q]}$ and $\{[\,\overline t_k]\}_{k\in[(q+1){\uparrow}p]}$. Let $N$ be the normal closure in $\Sigma_{g,b,p}$ of $t_{[(q+1){\uparrow}p]}$. Notice $\Sigma_{g,b,q} \simeq \Sigma_{g,b,p} \slash N$. We say that we have eliminated the last $(p-q)$ punctures. Since $N$ is $\AM_{g,b,q\perp (p-q)}$-invariant, we can define a homomorphism $\AM_{g,b,q\perp (p-q)} \to \AM_{g,b,q}$ which eliminates the last $(p-q)$ punctures, see \cite[Section 11]{DF05}.\medskip

Suppose $b\geq 2$. Let $\phi\in \AM_{g,b,p}$ and $l\in [1{\uparrow}(b-1)]$. Since $(\overline z_l^{\,e_l})^\phi = \overline z_{l}^{\,e_{l}}$, we see $\overline z_l^{\,\phi} = \overline z_{l}^{\,w_l}$ for some $w_l \in \Sigma_{g,b,p} \ast \langle e_{[1{\uparrow}(b-1)]} \mid \,\rangle$. Since $\phi$ maps $\Sigma_{g,b,p}$ to itself and $z_l \in \Sigma_{g,b,p}$, we see $w_l \in \Sigma_{g,b,p}$. Since $\overline e_{l} \overline z_{l} e_{l} = \overline z_{l}^{\,e_{l}} = (\overline z_l^{\,e_l})^\phi = \overline e_l^{\,\phi}\overline z_l^{\, \phi} e_l^{\phi} = \overline e_l^{\,\phi}(\overline w_l\overline z_{l} w_l) e_l^{\phi}$, we see $(e_l^{\phi}\overline e_{l}) \overline z_{l} (e_{l} \overline e_l^{\phi})= \overline w_l \overline z_{l} w_l$ and $e_l^{\,\phi} \overline e_{l} \in \overline w_l \langle z_{l} \rangle$. Hence, $e_l^{\phi} \in \overline w_l \langle z_{l} \rangle e_l$.

In $\M_{g,b,p}$, the difference between a puncture and a boundary component is that the Dehn twist with respect to a loop around a puncture is trivial in $\M_{g,b,p}$ and the Dehn twist with respect to a loop around a boundary component is not trivial in $\M_{g,b,p}$. In $\AM_{g,b,p}$, this fact is capturated by the fact that for $l\in [1{\uparrow} (b-1)]$ the map 
$$\left\{ 
\begin{array}{lll}
e_l & \mapsto & z_le_l,\\
a  & \mapsto & a, \quad a \in x_{[1{\uparrow}g]} \vee y_{[1{\uparrow}g]} \vee t_{[1{\uparrow}p]} \vee z_{[1{\uparrow}(b-1)]} \vee e_{[1{\uparrow}(l-1)]} \vee e_{[(l+1){\uparrow}(b-1)]}.
\end{array}
\right.$$
defines an element of $\AM_{g,b,p}$. We can see $e_l$ as an arc from the base point in the $b$-th boundary component to a point in the $l$-th boundary component.

Consider the homomorphism $\Sigma_{g,b,p} \ast \langle e_{[1{\uparrow}(b-1)]} \rangle \to \Sigma_{g,b-1,p+1} \ast \langle e_{[1{\uparrow}(b-2)]} \rangle$  such that $e_{(b-1)} \mapsto 1$, $z_{(b-1)} \mapsto t_{p+1}$ and identifies all the other generators. This homomorphism corresponds to converting the $(b-1)$-th boundary component to a puncture. This homomorphism induces a homomorphism $\AM_{g,b,p} \to \AM_{g,b-1,p \perp 1}$ which forgets $e_{b-1}$, see \cite[Section 9]{DF05}.\medskip

For $g = 0, \, b=1$ and $p\geq 1$, $\AM_{0,1,p}$ is isomorphic to the {\it $p$-string braid group}. We have $\AM_{0,1,p} = \langle \sigma_{[1{\uparrow}(p-1)]}\rangle$, where for all $i \in [1{\uparrow}(p-1)]$, $\sigma_i \in \Aut(\Sigma_{0,1,p})$ is defined by
\begin{equation}\label{eq:trena}
\sigma_{i} \coloneq \left\{
\begin{array}{llll}
t_i & \mapsto & t_{i+1},\\
t_{i+1} & \mapsto & t_i^{t_{i+1}},\\
t_k & \mapsto & t_k &\text{if } k \in [1{\uparrow}(i-1)]\vee [(i+2){\uparrow}p].
\end{array}
\right.
\end{equation}\medskip

Let $d\in \naturals,\, d\geq 2$.

Let $\Sigma_{g,1,p^{(d)}}$ denote the group $\langle x_{[1{\uparrow}g]}\vee y_{[1{\uparrow}g]}\vee \tau_{[1{\uparrow}p]}\mid \tau^d_1, \tau^d_2, \ldots, \tau^d_p\rangle$. Hence, $\Sigma_{g,1,p^{(d)}} \simeq \Sigma_{g,1,0} \ast C_d^{\ast p}$. Notice $\Sigma_{g,1,p^{(d)}} = \Sigma_{g,1,p}$, if $p = 0$.\medskip

Let $\AM_{g,1,p^{(d)}}$ denote the group of all automorphisms of $\Sigma_{g,1,p^{(d)}}$ that respect the sets $$\{\Pi_{i\in [1{\uparrow}g]} [x_i, y_i] \Pi \tau_{[1{\uparrow}p]}\},\quad \{[\overline \tau_k]\}_{k\in [1{\uparrow}p]}.$$\medskip

Let $\mathcal P_{[1{\uparrow}p]}$ be a set of $p$ interior points of $S_{g,1,0}$. For each $k\in [1{\uparrow}p]$, let $D(\mathcal P_k)$ be an open disc in the interior of $S_{g,1,0}$ and centered at $\mathcal P_k$. For each $k\in [1{\uparrow}p]$, let $D_k$ be a copy of the closed disc $\{z\in \complexes \mid \abs{z} \leq 1\}$. We define \[S_{g,1,p^{(d)}} \coloneq ((S_{g,1,0} - \vee_{k\in [1{\uparrow}p]} D(\mathcal P_k)) \vee (\vee_{k\in [1{\uparrow}p]}D_k)) \slash \sim\]
where $\sim$ is the following identification. For every $k\in [1{\uparrow}p]$, we identify the boundary of $D(\mathcal P_k)$, denoted $\partial D(\mathcal{P}_k)$, with a copy of $S^1=\{z\in \complexes \mid \abs{z} = 1\}$, and the boundary of $D_k$, denoted $\partial D_k$, with another copy of $S^1$. We define $f_k: \partial D_k \to \partial D(\mathcal{P}_k)$, $z \mapsto z^d$. Now, $\sim$ is defined by identifying $z$ and $f_k(z)$ for every $z \in \partial D_k$ and every $k\in [1{\uparrow}p]$.

It can be seen that $\Sigma_{g,1,p^{(d)}}$ is the fundamental group of $S_{g,1,p^{(d)}}$.

By analogy, $\AM_{g,1,p^{(d)}}$ will be called the {\it (algebraic) mapping class group} of $S_{g,1,p^{(d)}}$.\medskip

Since the elements of $\AM_{g,1,p}$ respect the set $\{[\,\overline t_k]\}_{k\in [1{\uparrow}p]}$, the natural homomorphism $\Sigma_{g,1,p} \to \Sigma_{g,1,p^{(d)}}$ induces a natural homomorphism \[\psi:\AM_{g,1,p} \to \AM_{g,1,p^{(d)}}.\]
If $p = 0$, then $\Sigma_{g,1,p} = \Sigma_{g,1,p^{(d)}}$ and $\psi$ is the identity.

\begin{theorem}\label{thm:principal}
The homomorphism 
$
\psi:\AM_{g,1,p} \to \AM_{g,1,p^{(d)}}
$
 is injective for all $p \in \naturals$.
\end{theorem}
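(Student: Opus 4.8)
The goal is to show $\ker\psi=1$. Write $\pi\colon\Sigma_{g,1,p}\onto\Sigma_{g,1,p^{(d)}}$ for the canonical surjection (identity on $x_{[1{\uparrow}g]}\vee y_{[1{\uparrow}g]}$ and $t_k\mapsto\tau_k$) and $N=\ker\pi$, the normal closure of $\{t_k^{\,d}\}_{k\in[1{\uparrow}p]}$. For $\phi\in\AM_{g,1,p}$, the condition $\psi(\phi)=\mathrm{id}$ is exactly $\pi\circ\phi=\pi$, i.e. $w^{\phi}w^{-1}\in N$ for every $w\in\Sigma_{g,1,p}$. Fix such a $\phi$; we must deduce $\phi=\mathrm{id}$.

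The first step pins $\phi$ down as far as the peripheral structure permits, using only that $\Sigma_{g,1,p^{(d)}}=\Sigma_{g,1,0}\ast C_d^{\ast p}$ is a free product. Since $\phi$ respects $\{[\,\overline t_k]\}_{k}$ it permutes the $[\,\overline t_k]$ by some $\sigma\in\Sym(p)$, so $t_k^{\,\phi}$ is conjugate to $t_{\sigma(k)}$ in $\Sigma_{g,1,p}$; applying $\pi$ and using $\pi\circ\phi=\pi$ shows $\tau_k$ is conjugate to $\tau_{\sigma(k)}$ in the free product, and as $d\ge2$ the elements $\tau_k$ are nontrivial and lie in distinct free factors, hence are pairwise non-conjugate, so $\sigma=\mathrm{id}$. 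Writing $t_k^{\,\phi}=t_k^{\,u_k}$ and applying $\pi$ shows $\pi(u_k)$ centralises $\tau_k$; the centraliser of $\tau_k$ in the free product is $\langle\tau_k\rangle$, so $\pi(u_k)\in\langle\tau_k\rangle$, and after replacing $u_k$ by $t_k^{-m}u_k$ for a suitable $m$ (which does not change $t_k^{\,u_k}$) we may assume $u_k\in N$. Likewise $x_i^{\,\phi}\in x_iN$ and $y_i^{\,\phi}\in y_iN$, and $\phi$ still fixes the relator $R\coloneq\Pi_{i\in[1{\uparrow}g]}[x_i,y_i]\,\Pi t_{[1{\uparrow}p]}$.

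The remaining, and main, task is to upgrade this to $\phi=\mathrm{id}$. I would argue by induction on $p$; the base case $p=0$ is trivial since then $\psi$ is the identity. For the inductive step I would use the identification $\AM_{g,1,p}\simeq\M_{g,1,p}$ together with the ``eliminate the last puncture'' homomorphism $\AM_{g,1,(p-1)\perp 1}\to\AM_{g,1,p-1}$ of the introduction: the kernel of $\AM_{g,1,p}\to\AM_{g,1,p-1}$ is the point-pushing subgroup $\Sigma_{g,1,p-1}$, and one compares it with the corresponding ``forget a cone point'' map on the $\Sigma_{g,1,p^{(d)}}$ side via $\psi$ and $\psi_{p-1}$. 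By induction $\psi_{p-1}$ is injective, so $\ker\psi$ is carried into the point-pushing subgroup, and one shows there, using the description $u_p\in N$ obtained above, that the only point-push landing in $\ker\psi$ is the identity. Equivalently, one may argue geometrically: realise $\phi$ by a homeomorphism $h$ of $S_{g,1,p}$ fixing $\partial$; the hypothesis $\psi(\phi)=\mathrm{id}$ says that the homeomorphism of the orbifold $S_{g,1,p^{(d)}}$ obtained by capping the punctures with cone discs is isotopic to the identity rel $\partial$; such an orbifold isotopy restricts to an isotopy of $S_{g,1,p}$ (with the punctures free to move), so $h$ fixes, up to isotopy, a filling system of simple closed curves and arcs of $S_{g,1,p}$ — all of which avoid the cone points — and the Alexander method then gives $h\simeq\mathrm{id}$ rel $\partial$.

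I expect the main obstacle to lie precisely in this last step, and specifically in the \emph{non}-injectivity of $\pi$: the kernel $N$ is free of infinite rank, so a priori there is a great deal of room for a nontrivial automorphism to act trivially modulo $N$, and the point-pushing subgroups on the two sides are related by the non-injective map $\pi$. What must be exploited is the orbifold subtlety that pushing a cone point of order $d$ carries an extra $C_d$ of local rotations — equivalently, that pushing the $p$-th puncture of $S_{g,1,p}$ along a loop which becomes trivial in $\Sigma_{g,1,(p-1)^{(d)}}$ is nevertheless visible — so that this apparent room collapses; a purely algebraic treatment of this (as the keywords on orderings and ends of groups suggest the paper provides) would replace the Alexander method by a combinatorial analysis of automorphisms of the free product $\Sigma_{g,1,0}\ast C_d^{\ast p}$ and of $N$ inside $\Sigma_{g,1,p}$. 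The free-product combinatorics of the first step is comparatively soft; the rigidity forced jointly by fixing $R$ and preserving all the classes $[\,\overline t_k]$ is where the content lies.
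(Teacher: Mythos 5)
Your first, free-product step is fine, and your reduction via the Birman exact sequence (induction on $p$, landing in the point-pushing subgroup $\Sigma_{g,1,p-1}$) is a reasonable strategy; but the proof stops exactly where the content is. The claim you still owe is: no nontrivial point-push lies in $\ker\psi$, and in particular a push of the $p$-th puncture along a loop $\gamma$ lying in the normal closure $N$ of the $t_k^{\,d}$ (e.g.\ $\gamma=t_1^{\,d}$) still acts nontrivially on $\Sigma_{g,1,p^{(d)}}$. You explicitly flag this as the ``main obstacle'' and offer no argument for it; the phrase ``one shows there, using the description $u_p\in N$, that the only point-push landing in $\ker\psi$ is the identity'' is an assertion of the theorem in its essential case, not a proof. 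Your geometric alternative does not close the gap either: to pass from $\psi(\phi)=1$ in $\AM_{g,1,p^{(d)}}$ (an automorphism statement about $\Sigma_{g,1,0}\ast C_d^{\ast p}$) to ``the induced orbifold homeomorphism is isotopic to the identity rel $\partial$'' you need a Dehn--Nielsen--Baer/Birman--Hilden theorem for orbifolds with one boundary component, which is essentially equivalent to what is being proved (in the paper, Theorem~\ref{thm:BH} is a \emph{consequence} of Theorem~\ref{thm:inclusio}, which rests on Theorem~\ref{thm:principal}); and even granting it, an orbifold isotopy lets the cone points move, so concluding $h\simeq\mathrm{id}$ rel the punctures needs more care than ``the Alexander method then gives it.''

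For comparison, the paper closes this gap by a completely different mechanism: it proves (Theorem~\ref{thm:t-sqfree}) that for every $\phi\in\AM_{g,1,p}$ the images of the generators $x_i,y_i,t_k$ are $t$-squarefree words, via McCool's generation of the groupoid of surface word sets (Theorem~\ref{t:mccool}) and the order-preservation of the induced orderings on ends of the free group (Proposition~\ref{prop:order}, Lemma~\ref{lemma_rep}). Injectivity of $\psi$ is then immediate and needs no induction, no Birman sequence and no $(g,p,d)\neq(0,2,2)$ exclusion: if $\phi\in\ker\psi$ then $a^{\phi}$ and $a$ have the same image in $\Sigma_{g,1,0}\ast C_d^{\ast p}$, and a $t$-squarefree word of $\Sigma_{g,1,p}$ has the same normal form there as in the free product (since $d\ge 2$, no $\tau_k^{\pm d}$ can appear inside it), so $a^{\phi}=a$. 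So the ``room'' you worry about in $N$ is eliminated not by analysing $N$ or point-pushes, but by the a priori combinatorial constraint on images of generators; without Theorem~\ref{thm:t-sqfree} or a genuine substitute for it, your proposal is not a proof.
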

\medskip

Let $\kappa : S_{g',b,0} \to S_{g,1,0}$ be an index $m \in \naturals$ branched regular cover with $p$ branching points in the interior of $S_{g,1,0}$ which lift to $q$ points in $S_{g',b,0}$. Notice that $q = 0$ if and only if $p = 0$. Let $\kappa': S_{g',b,q} \to S_{g,1,p}$ be the corresponding unbranched cover. We identify $\Sigma_{g',b,q} = \pi_1(S_{g',b,q}, \hat \ast)$ with $\kappa'_*(\Sigma_{g',b,q})$, where $\hat \ast$ is a point in the $b$-th boundary component of $S_{g',b,q}$. Notice that $\Sigma_{g',b,q}$ is a normal subgroup of $\Sigma_{g,1,p}$ of index $m$. We set $G \coloneq \Sigma_{g,1,p} \slash \Sigma_{g',b,q}$ the group of deck transformations.

We put $\varrho = \Pi_{i\in [1{\uparrow}g]} [x_i, y_i] \Pi t_{[1{\uparrow}p]} \Sigma_{g',b,q} \in G$. Let $c$ be the order of $\varrho$ in $G$. Since $\varrho^c = 1$ in $G$, we see that $(\Pi_{i\in [1{\uparrow}g]} [x_i, y_i] \Pi t_{[1{\uparrow}p]})^c \in \Sigma_{g',b,q}$. Notice that $(\Pi_{i\in [1{\uparrow}g]} [x_i, y_i] \Pi t_{[1{\uparrow}p]})^{-c}$ represents a loop around the $b$-th boundary component. We take a basis $\hat x_{[1{\uparrow}g']} \vee \hat y_{[1{\uparrow}g']} \vee \hat z_{[1{\uparrow}(b-1)]} \vee \hat t_{[1{\uparrow}q]}$ of $\Sigma_{g',b,q} = \pi_1(S_{g',b,q}, \hat \ast)$ such that $$\Pi_{i\in [1{\uparrow}g']} [\hat x_i, \hat y_i] \Pi \hat z_{[1{\uparrow}(b-1)]} \Pi \hat t_{[1{\uparrow}q]} = (\Pi_{i\in [1{\uparrow}g]} [x_i, y_i] \Pi t_{[1{\uparrow}p]})^c.$$
Recall $G$ has cardinality $m$. The subgroup $\langle \varrho \rangle \leq G$ has index $b=m\slash c$. 
For every $l\in [1{\uparrow}(b-1)]$, let $w_l \in \Sigma_{g,1,p} - \Sigma_{g',b,q}$ such that $$\hat z_l = \overline w_l (\Pi_{i\in [1{\uparrow}g]} [x_i, y_i] \Pi t_{[1{\uparrow}p]})^{-c} w_l.$$
We put $\rho_l = w_l \Sigma_{g',b,q} \in G$. Then $G = \langle \varrho\rangle\rho_1 \vee \langle \varrho\rangle\rho_2 \cdots \vee \langle \varrho\rangle\rho_{(b-1)} \vee \langle \varrho\rangle$. That is, the boundary components of $S_{g',b,p}$ are image by deck transformations of the $b$-th boundary component.

For every $k\in [1{\uparrow}p]$ we put $\varrho_k = t_k \Sigma_{g',b,q} \in G$. Let $d_k$ be the order of $\varrho_k$ in $G$. Since $t_k$ corresponds to a branching point, $t_k \notin \Sigma_{g',b,q}$ and $d_k \geq 2$. Since $\varrho_k^{d_k} = 1$ in $G$, we see that $t_k^{d_k} \in \Sigma_{g',b,q}$. Notice that $t_k^{d_k}$ represents a loop around a lift of the $k$-th puncture of $S_{g,1,p}$. 
The subgroup $\langle \varrho_k \rangle$ has index $m_k = m\slash d_k$ in $G$. 
Hence, $G = \langle \varrho_k\rangle\rho_{1,k} \vee \langle \varrho_k\rangle\rho_{2,k} \cdots \vee \langle \varrho_k\rangle\rho_{m_k,k}$, where $\rho_{i,k} = u_{i,k} \Sigma_{g',b,q} \in G$ for all $i \in [1{\uparrow}m_k]$. Notice that $(t_k^{d_k})^{u_{1,k}}, (t_k^{d_k})^{u_{2,k}}, \ldots, (t_k^{d_k})^{u_{m_k,k}}$ represent loops around the $m_k$ lifts of the $k$-th puncture. We choose $u_{1, k}, u_{2, k}, \ldots, u_{m_k, k}$ such that 
$\{(t_k^{d_k})^{u_{1,k}}, (t_k^{d_k})^{u_{2,k}}, \ldots, (t_k^{d_k})^{u_{m_k,k}}\} \subseteq \{\hat t_1, \hat t_2, \ldots, \hat t_q\}$. Then
\[
\{\hat t_1, \hat t_2, \ldots, \hat t_q\} = \bigvee_{k\in [1{\uparrow}p]} \{(t_k^{d_k})^{u_{1,k}}, (t_k^{d_k})^{u_{2,k}}, \ldots, (t_k^{d_k})^{u_{m_k,k}}\}.
\]

Suppose, now, that $\Sigma_{g',b,q}$ is $\AM_{g,1,p}$-invariant. It is easy to see that $d_1=d_k \geq 2$ for all $k\in [1{\uparrow}p]$. Let $d = d_1$. Every $\phi \in \AM_{g,1,p}$ induces an automorphisms of $\Sigma_{g',b,q}$ by restriction. Hence, we have a homomorphism $\AM_{g,1,p} \to \Aut (\Sigma_{g',b,q})$ given by restriction. Since, in $\Sigma_{g,1,p}$, $\Pi_{i \in [1{\uparrow}g']} [\hat x_i,\hat y_i] \Pi \hat z_{[1{\uparrow}(b-1)]} \Pi \hat t_{[1{\uparrow}q]} = (\Pi_{i\in [1{\uparrow}g]} [x_i,y_i] \Pi t_{[1{\uparrow}p]})^{c}$, $\hat z_{l}$ is conjugate to $(\Pi_{i\in [1{\uparrow}g]} [x_i,y_i] \Pi t_{[1{\uparrow}p]})^{-c}$ for all $l \in [1{\uparrow}(b-1)]$, and $\hat t_{k}$ is conjugate to $t_j^d$, where $j\in [1{\uparrow}p]$, for all $k\in [1{\uparrow}q]$, we have that the image of the homomorphism $\AM_{g,1,p} \to \Aut (\Sigma_{g',b,q})$ lies inside $\AM_{g',1,(b-1)\perp q}$.

Since $\Sigma_{g',b,q}$ is $\AM_{g,1,p}$-invariant, every homeomorphism of $S_{g,1,p}$ lifts to a homeomorphism of $S_{g',b,q}$ which fixes the $b$-th boundary component pointwise, but this lift may not fix the first $(b-1)$ boundary components pointwise. If we convert the first $(b-1)$ boundary components of $S_{g',b,p}$ into punctures, this is not a problem. If we want to conserve the first $(b-1)$ boundary components, we have to restrict ourselves to homeomorphisms of $S_{g,1,p}$ whose lifts fixe the boundaries pointwise. Algebraically, if we want to have a homomorphism inside $\AM_{g',b,q}$ we have to define the image of $\hat e_{[1{\uparrow}(b-1)]}$. To do this, we need to restrict ourselves to the following subgroup of $\AM_{g,1,p}$ (since $\Sigma_{g',b,q}$ is $\AM_{g,1,p}$-invariant, every element of $\AM_{g,1,p}$ induces an automorphism of $G=\Sigma_{g,1,p} \slash \Sigma_{g',b,q}$). $$\AM^G_{g,1,p} \coloneq \{\phi \in \AM_{g,1,p} \mid \phi \textrm{ induces the identity of } G\}.$$

\begin{theorem}\label{thm:inclusio} With the above notation, if $\Sigma_{g',b,q}$ is $\AM_{g,1,p}$-invariant and $(g,p,d) \neq (0,2,2)$ then the composition 
\begin{equation}\label{eq:suc_ex}
\AM_{g,1,p} \to \AM_{g',1,(b-1)\perp q} \to \AM_{g',1,(b-1)}
\end{equation} where the first homomorphism is given by restriction and the second homomorphism is given by eliminating the last $q$ punctures, is injective.\medskip
\end{theorem}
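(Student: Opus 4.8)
The plan is to reduce Theorem~\ref{thm:inclusio} to Theorem~\ref{thm:principal} together with a careful analysis of the kernel of the restriction map. First I would observe that the two homomorphisms in \eqref{eq:suc_ex} fit into a commutative diagram with $\psi : \AM_{g,1,p} \to \AM_{g,1,p^{(d)}}$: the covering $\kappa'$ corresponds, at the level of fundamental groups, to a finite-index inclusion $\Sigma_{g',b,q} \hookrightarrow \Sigma_{g,1,p}$, and passing to the orbifold quotient $\Sigma_{g,1,p^{(d)}} \simeq \Sigma_{g,1,0} \ast C_d^{\ast p}$ realizes $G$ as a quotient of $\Sigma_{g,1,p^{(d)}}$ in which the images of $\Sigma_{g',b,q}$ and of $\langle \tau_1^d,\ldots,\tau_p^d\rangle$ generate the same normal subgroup. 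So an element $\phi$ of $\AM_{g,1,p}$ that restricts to the identity on $\Sigma_{g',b,q}$ and acts trivially enough to be killed by the composition will, in particular, be killed by $\psi$; by Theorem~\ref{thm:principal}, $\psi$ is injective, hence $\phi = 1$. The real content is to check that the kernel of the composition in \eqref{eq:suc_ex} is contained in (is actually equal to) the kernel of $\psi$.

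Concretely, suppose $\phi \in \AM_{g,1,p}$ maps to $1$ under the composition. Unwinding the second arrow — eliminating the last $q$ punctures is the map $\AM_{g',1,(b-1)\perp q} \to \AM_{g',1,(b-1)}$ induced by quotienting $\Sigma_{g',b,q}$ by the normal closure $N$ of $\hat t_{[1{\uparrow}q]}$ — the hypothesis says the restriction $\phi|_{\Sigma_{g',b,q}}$ induces the identity on $\Sigma_{g',b,q}/N$. Now $\Sigma_{g',b,q}/N$ is a free group on $\hat x_{[1{\uparrow}g']} \vee \hat y_{[1{\uparrow}g']} \vee \hat z_{[1{\uparrow}(b-1)]}$, i.e.\ a surface group $\Sigma_{g',b,0}$, and I want to lift the triviality statement back up. The key algebraic fact is that $\bigcap_{w\in\Sigma_{g,1,p}} \Sigma_{g',b,q}^{\,w}$-type considerations, combined with the explicit description of the $\hat t_k$ as conjugates of the $t_j^{d}$ and of the $\hat z_l$ as conjugates of $(\Pi[x_i,y_i]\Pi t_{[1{\uparrow}p]})^{-c}$, let one recover the action of $\phi$ on all of $\Sigma_{g,1,p}$ from its action on $\Sigma_{g',b,q}/N$ up to inner automorphisms — and the boundary-respecting constraints built into $\AM$ pin down the inner ambiguity. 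This is where the hypothesis $(g,p,d)\neq(0,2,2)$ must be used: in that excluded case the orbifold Euler characteristic degenerates (a sphere with two cone points of order $2$ is a "bad" orbifold / the relevant automorphism group has extra center), so one cannot rule out a nontrivial $\phi$ acting by a deck-type symmetry; outside that case the cover is "large enough" that the only ambiguity is inner and is absorbed.

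I would organize the argument as follows. Step 1: set up the commutative diagram relating the composition \eqref{eq:suc_ex} to $\psi$, using that the branched cover $\kappa$ is classified by the epimorphism $\Sigma_{g,1,p}\onto G$ which factors through $\Sigma_{g,1,p^{(d)}}$ (here $d_k=d$ for all $k$, already noted in the excerpt). Step 2: show that an automorphism of $\Sigma_{g',b,q}$ that lies in $\AM_{g',1,(b-1)\perp q}$, induces the identity on $\Sigma_{g',b,q}/N$, and arises by restriction from $\AM^G_{g,1,p}$, must in fact be the identity on $\Sigma_{g',b,q}$ — this uses that $N$ is, up to the finitely many conjugacy classes $[\hat t_k]$, "transverse" to the rest of the basis, plus the peripheral structure, and it is here that $(g,p,d)\neq(0,2,2)$ enters to exclude a hyperelliptic-type exceptional symmetry. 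Step 3: from "$\phi$ restricts to the identity on $\Sigma_{g',b,q}$" conclude, via the conjugacy-class constraints defining $\AM_{g,1,p}$ and the fact that $\Sigma_{g',b,q}$ has finite index (so $C_{\Sigma_{g,1,p}}(\Sigma_{g',b,q})=1$ since $\Sigma_{g,1,p}$ is a nonabelian free group), that $\phi$ itself is inner; then the boundary-word constraint $\{\Pi[x_i,y_i]\Pi t_{[1{\uparrow}p]}\}$ being a singleton forces $\phi$ to be conjugation by a power of that boundary word, and the other constraints force that power to be $0$, so $\phi=1$. The main obstacle is Step 2: controlling exactly how much of the automorphism survives the double quotient (by $\Sigma_{g',b,q}$ and then by $N$) and proving the residual freedom is purely inner, which is precisely the place the $(0,2,2)$ exception is forced. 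An alternative, possibly cleaner route for Step 2 is to invoke Theorem~\ref{thm:principal} directly: since $\psi$ is injective and the composition factors (after the above identifications) through $\psi$ followed by an injection coming from the peripheral-structure-preserving restriction on the orbifold side, the whole composition is injective; this trades the hands-on kernel analysis for a commutativity check, and I would aim to make that the backbone of the proof.
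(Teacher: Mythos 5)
Your high-level skeleton is the paper's: one identifies the composition \eqref{eq:suc_ex} applied to $\phi$ with the restriction of $\psi(\phi)$ to the finite-index subgroup $\Sigma_{g',b,q}\slash N_d=\Sigma_{g',1,(b-1)}$ of $\Sigma_{g,1,p^{(d)}}$, and then combines injectivity of $\psi$ (Theorem~\ref{thm:principal}) with injectivity of that restriction. But both ingredients are asserted rather than proved, and they carry essentially all the content. First, identifying ``eliminating the last $q$ punctures'' with ``reducing mod $N_d$'' requires showing that the normal closure of $\hat t_{[1{\uparrow}q]}$ \emph{in} $\Sigma_{g',b,q}$ equals the normal closure $N_d$ of $t_1^d,\ldots,t_p^d$ \emph{in} $\Sigma_{g,1,p}$ (the paper's Lemma~\ref{lema:normal}); a priori the former could be smaller, and the verification uses the coset decomposition $G=\bigvee\langle\varrho_k\rangle\rho_{j,k}$ together with the fact that $t_k$ commutes with $t_k^{d}$. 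Your phrase that the two sets ``generate the same normal subgroup'' is the statement to be proved, not a proof.

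Second, and more seriously, the claim that the composition factors as $\psi$ followed by ``an injection coming from the peripheral-structure-preserving restriction on the orbifold side'' is exactly Proposition~\ref{prop:inclusio}, the heart of the matter, and you leave it unproved; your hands-on Step 2 is likewise not carried out, and as formulated it is risky, since the kernel of $\AM_{g',1,(b-1)\perp q}\to\AM_{g',1,(b-1)}$ is large (point-pushing elements), so triviality on $\Sigma_{g',b,q}\slash N_d$ does not yield triviality on $\Sigma_{g',b,q}$ without exploiting, in an essential way, that the automorphism extends to $\Sigma_{g,1,p}$ --- the detour through $\Sigma_{g,1,p^{(d)}}$ is precisely the mechanism for doing so. The paper's argument is: a power of $\Pi_{i}[x_i,y_i]\Pi\tau_{[1{\uparrow}p]}$ and its $\tau_k$-conjugate lie in $H\slash N_d$, whence $\tau_k^{\psi(\phi)}\overline\tau_k$ commutes with that power and therefore lies in $\langle\Pi_{i}[x_i,y_i]\Pi\tau_{[1{\uparrow}p]}\rangle$; the constraint $[\tau_k^{\psi(\phi)}]=[\tau_{k'}]$ then forces $\tau_k^{\psi(\phi)}=\tau_k$, \emph{except} when $(g,p,d)=(0,2,2)$, where $\Sigma_{0,1,2^{(2)}}\simeq C_2\ast C_2$ and every $(\tau_1\tau_2)^{r'}\tau_1$ is conjugate to $\tau_1$ or $\tau_2$ --- this is the concrete place the exception enters, not a ``bad orbifold'' heuristic; finally $a^{\psi(\phi)}=a$ for $a\in x_{[1{\uparrow}g]}\vee y_{[1{\uparrow}g]}$ by uniqueness of roots in the free product, and Theorem~\ref{thm:principal} finishes. (Your centralizer observation, transplanted from $\Sigma_{g,1,p}$ to $\Sigma_{g,1,p^{(d)}}$, could in principle shortcut part of this, since outside the excluded cases $H\slash N_d$ is a nonabelian free normal subgroup of finite index with trivial centralizer in $\Sigma_{g,1,0}\ast C_d^{\ast p}$; but you would still need to verify nonabelianness, and you did not make that argument.)
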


Theorem~\ref{thm:inclusio} gives an algebraic proof of the following theorem, which is an analog for surfaces with one boundary component of a theorem of Birman and Hilden~\cite[Theorem 2]{BirmanHilden} (the hypothesis that $\kappa_*'(\Sigma_{g',b,q})$ is $\AM_{g,1,p}$-invariant can be removed, but then we need extra notation).

\begin{theorem}\label{thm:BH}
Let $\kappa: S_{g',b,0} \to S_{g,1,0}$ be a finite index regular cover with $p$ branching points in $S_{g,1,0}$. Let $\hat f$ be an homeomorphism of $S_{g',b,0}$ which fixes the $b$-th boundary component pointwise and preserves the fibers of $\kappa: S_{g',b,0} \to S_{g,1,0}$. Then $\hat f$ induces an homeomorphism $f$ of $S_{g,1,0}$ such that $\kappa \hat f = f \kappa$. Let $\kappa': S_{g',b,q} \to S_{g,1,p}$ be the corresponding unbranched cover. Suppose $\kappa_*'(\Sigma_{g',b,q})$ is $\AM_{g,1,p}$-invariant and $(g,p) \neq (0,2)$. If $\hat f$ is isotopic to the identity relative to the $b$-th boundary component, then $f$ is isotopic to the identity relative to the boundary.
\end{theorem}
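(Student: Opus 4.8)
The plan is to deduce Theorem~\ref{thm:BH} from the injectivity statement of Theorem~\ref{thm:inclusio}, by translating both the hypothesis on $\hat f$ and the conclusion on $f$ into statements about the composition~\eqref{eq:suc_ex}. First I would fix compatible base points: $\ast$ on the boundary circle of $S_{g,1,0}$ and $\hat\ast$ on the $b$-th boundary component of $S_{g',b,0}$, with $\kappa(\hat\ast)=\ast$. Since $\hat f$ fixes the $b$-th boundary component pointwise it fixes $\hat\ast$; since $\kappa$ is unbranched over the boundary and maps the $b$-th boundary component onto the whole boundary circle of $S_{g,1,0}$, the relation $\kappa\hat f=f\kappa$ forces $f$ to fix $\partial S_{g,1,0}$ pointwise, so $f(\ast)=\ast$ and $f$ is orientation-preserving. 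As $\hat f$ preserves the fibres of $\kappa$, it permutes the $p$ sets of preimages of the branching points, whence $f$ respects the set $\mathcal P_{[1{\uparrow}p]}$ of branching points and so represents an element $\phi\in\M_{g,1,p}\simeq\AM_{g,1,p}$. Likewise $\hat f$ permutes the $b$ boundary components of $S_{g',b,0}$ and fixes the $b$-th one, so, after converting the remaining $b-1$ boundary components into punctures, $\hat f$ represents an element $\hat\phi\in\M_{g',1,(b-1)}\simeq\AM_{g',1,(b-1)}$; the hypothesis that $\hat f$ is isotopic to the identity relative to the $b$-th boundary component says precisely that $\hat\phi=1$.

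Next I would check that~\eqref{eq:suc_ex} carries $\phi$ to $\hat\phi$. Algebraically $\phi=f_*$, the automorphism of $\Sigma_{g,1,p}=\pi_1(S_{g,1,p},\ast)$. Because $\Sigma_{g',b,q}=\kappa'_*(\pi_1(S_{g',b,q},\hat\ast))$ is $\AM_{g,1,p}$-invariant, $f$ lifts to a unique self-homeomorphism of $S_{g',b,q}$ fixing $\hat\ast$; the restriction of $\hat f$ to $S_{g',b,q}$ is such a lift (it satisfies $\kappa'\hat f=f\kappa'$ and fixes $\hat\ast$), so $\hat f_*$ is exactly the restriction of $f_*$ to $\Sigma_{g',b,q}$. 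Thus the first arrow of~\eqref{eq:suc_ex} sends $\phi$ to the class of $\hat f$ regarded on $S_{g',b,0}$ with the $b-1$ boundary components turned into punctures and the $q$ preimages of branching points kept as punctures. The second arrow quotients out the normal closure of the loops around those $q$ punctures, which geometrically amounts to filling those points back in; since $\hat f$ is the restriction of a homeomorphism of $S_{g',b,0}$ it extends over the filled points, so the resulting class is the class of $\hat f$ in $\AM_{g',1,(b-1)}$, namely $\hat\phi$.

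Finally, $(g,p)\neq(0,2)$ implies $(g,p,d)\neq(0,2,2)$, and $\Sigma_{g',b,q}$ is $\AM_{g,1,p}$-invariant, so Theorem~\ref{thm:inclusio} applies and~\eqref{eq:suc_ex} is injective; together with $\hat\phi=1$ this gives $\phi=1$. Hence $f$ is isotopic to the identity through homeomorphisms of $S_{g,1,0}$ fixing the boundary pointwise and preserving $\mathcal P_{[1{\uparrow}p]}$, and forgetting the branching points this is in particular an isotopy of $S_{g,1,0}$ from $f$ to the identity relative to the boundary --- which is the conclusion of Theorem~\ref{thm:BH}. I expect the one genuinely delicate point to be the translation in the second paragraph: matching the algebraic ``restrict, then forget punctures'' with the geometric ``lift, then fill in'', and in particular verifying that the lift of $f$ singled out by the condition ``$\hat f$ fixes the $b$-th boundary component pointwise'' is the lift inducing the restriction of $f_*$ rather than a deck-translate of it. Everything else reduces to the identifications $\M\simeq\AM$ for surfaces with boundary and to Theorem~\ref{thm:inclusio}.
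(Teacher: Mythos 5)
Your proposal is correct and follows essentially the same route as the paper: identify the classes of $f$ and $\hat f$ with elements $\phi\in\AM_{g,1,p}$ and $\hat\phi\in\AM_{g',1,(b-1)}$, check that the composition~\eqref{eq:suc_ex} carries $\phi$ to $\hat\phi$, and invoke the injectivity of Theorem~\ref{thm:inclusio}. You supply more detail than the paper does on the geometric-to-algebraic translation (base points, why $f$ fixes the boundary pointwise, why the restriction of $\hat f$ is the lift inducing $f_*|_{\Sigma_{g',b,q}}$), but the argument is the same.
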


\begin{proof}{\it (using Theorem~\ref{thm:inclusio}).} It is a general fact that if $\hat f$ preserves the fibers of $\kappa: S_{g',b,0} \to S_{g,1,0}$, then $\hat f$ induces an homeomorphism $f$ of $S_{g,1,0}$ such that $\kappa \hat f = f \kappa$. In particular, $f$ sends branching points to branching points. 

Let $\kappa': S_{g',b,q} \to S_{g,1,p}$ be the corresponding unbranched cover. We identify $\Sigma_{g',b,q}$ and $\kappa'_*(\Sigma_{g',b,q})$.
Since $f$ sends branching points to branching points, $f$ restricts to a homeomorphism $g$ of $S_{g,1,p}$. Since $\Sigma_{g',b,q}$ is $\AM_{g,1,p}$-invariant, there are induced automorphisms $g_* \in \AM_{g,1,p}$ and $\hat f_* \in \AM_{g',1,(b-1)}$ such that $g_* \mapsto \hat f_*$ by the composition \eqref{eq:suc_ex}. If $\hat f$ is isotopic to the identity relative to the $b$-th boundary component, then $\hat f_* =1 $. Hence, by Theorem~\ref{thm:inclusio} $g_* = 1$. Then $g$ is isotopic to the identity relative to the boundary. Then $f$ is isotopic to the identity relative to the boundary. 
\end{proof}

\section{Examples}
We fix $g,p$ such that $(g,p) \neq (0,2)$. Let $\hat S$ be the universal cover of $S_{g,1,p}$. The fundamental group of $S_{g,1,p}$, denoted $\Sigma_{g,1,p}$, acts on $\hat S$. Let $H$ be a subgroup of $\Sigma_{g,1,p}$ of index $m \in \naturals$. Suppose $H$ is $\AM_{g,1,p}$-invariant. The quotient space $\hat S \slash H$ is an orientable surface, denoted $S_{g',b,q}$. We identify the fundamental group of $S_{g',b,q}$, denoted $\Sigma_{g',b,q}$, with $H$. The cover $\hat S \to S_{g,1,p}$ induces a cover $S_{g',b,q} \to S_{g,1,p}$ with group of deck transformation $G \coloneq \Sigma_{g,1,p}\slash \Sigma_{g',b,q}$. If $t_k \notin \Sigma_{g',b,q}$ for all $k \in [1{\uparrow}p]$, then the corresponding cover $S_{g',b,0} \to S_{g,1,0}$ has $p$ branching points in $S_{g,1,0}$ which lift to $q$ points in $S_{g',b,0}$. By Theorem~\ref{thm:inclusio}, we have an embedding $\AM_{g,1,p} \hookrightarrow \AM_{g',1,(b-1)}$. By choosing an appropriated basis of $H$, we can compute elements in the image of this embedding from elements of $\AM_{g,1,p}$. 

The first example is classical. In the second example, we give a basis of $H$ and compute elements in the image of the embedding.\medskip

\textbf{Example 1.} Let $H$ be the kernel of the homomorphism $\Sigma_{0,1,p} \to \langle \tau \mid \tau^ 2 \rangle$ such that $t_k \mapsto \tau$ for all $k\in [1{\uparrow} p]$. It is standard to see that $H$ is a free group of rank $2p-1$ with basis $t_1^2, t_1t_2, t_1t_3, \ldots, t_1t_p, t_1\overline t_2, t_1 \overline t_3, \ldots, t_1 \overline t_p$. It is easy to see that $H$ is invariant by the generators of $\AM_{0,1,p}$ given in \eqref{eq:trena}. For $k\in [1{\uparrow}p]$, notice that $\varrho_k = t_kH$ has order $2$ in $G \coloneq \Sigma_{0,1,p} \slash H \simeq C_2$. Hence, $\langle \varrho_k\rangle$ has index $1$ in $G$ and the $k$-th puncture in $S_{g,1,p}$ lifts to one puncture in $S_{g',b,q}$. Thus, $q=p$.
\begin{enumerate}[(a).]
\item If $p$ is even, then $\Pi t_{[1{\uparrow}p]} \in H$ and $\varrho = \Pi t_{[1{\uparrow}p]} H$ has order $1$ in $G$. Hence, $\langle \varrho \rangle$ has index $2$ in $G$ and we have $b=2$. Since $\Sigma_{g',b,q}$ has rank $2g'+b-1+q$ and $H$ has rank $2p-1$, we have $2g'+2-1+p = 2p-1$ and $g' = (p-2)\slash 2$. Hence, $\AM_{0,1,p} \hookrightarrow \AM_{(p-2)\slash 2,1,1}$, if $p$ is even.
\item If $p$ is odd, then $\Pi t_{[1{\uparrow}p]} \notin H$ and $\varrho = \Pi t_{[1{\uparrow}p]} H$ has order $2$ in $G$. Hence, $\langle \varrho \rangle$ has index $1$ in $G$ and $b=1$. Since $\Sigma_{g',b,q}$ has rank $2g'+b-1+q$ and $H$ has rank $2p-1$, we have $2g'+1-1+p = 2p-1$ and $g' = (p-1)\slash 2$. Hence, $\AM_{0,1,p} \hookrightarrow \AM_{(p-1)\slash 2,1,0}$, if $p$ is odd.
\end{enumerate}

\textbf{Example 2.} Let $\Sigma_{1,1,0} = \langle x,y \mid \,\, \rangle$. Let $H$ be the kernel of the homomorphism $\Sigma_{1,1,0} \to \langle \tau_1 \mid \tau_1^2 \rangle \times \langle \tau_2 \mid \tau_2^2 \rangle$ such that $x \mapsto \tau_1,\, y \mapsto \tau_2$. It is standard to see that $H$ is a free group of rank $5$. It can be shown that $H$ is a characteristic subgroup of $\Sigma_{1,1,0}$. Notice that $\varrho = \overline x \,\,\overline y x y H$ has order $1$ in $G\coloneq \Sigma_{1,1,0} \slash H \simeq C_2 \times C_2$. Hence, $\langle \varrho \rangle$ has index  $4$ in $G$ and $b = 4$. We have $p = 0$ and $q = 0$.
Since $\Sigma_{g',b,q}$ has rank $2g'+b-1+q$ and $H$ has rank $5$, we have $2g'+4-1+0 = 5$ and $g' = 1$. Hence, $\AM_{1,1,0} \hookrightarrow \AM_{1,1,3}$. We take the following basis of $\Sigma_{1,1,3}$: $\hat x = x^2,\, \hat y = y^2,\, \hat t_1 = (\overline y\,\, \overline x yx)^{\overline x\,\, \overline y^2 x^2 y^2},\, \hat t_2 = (\overline y\,\, \overline x yx)^{y},\, \hat t_3 = (\overline y\,\, \overline x yx)^{x y}$. It is well-known that $\AM_{1,1,0} = \langle \alpha, \beta \mid \alpha \beta \alpha = \beta \alpha \beta \rangle$, where 
$$\begin{array}{ll}
\alpha \coloneq
\left\{
\begin{array}{lll}
x & \mapsto & \overline y x,\\
y & \mapsto & y,
\end{array}
\right.
& 
\beta\coloneq
\left\{
\begin{array}{lll}
x & \mapsto & x,\\
y & \mapsto & x y.
\end{array}
\right.
\end{array}
$$
A straightforward computation shows that the image of $\alpha$ and $\beta$ in $\AM_{1,1,3}$, denoted $\hat \alpha$ and $\hat \beta$, are
$$
\begin{array}{ll}
\hat \alpha\coloneq
\left\{
\begin{array}{lll}
\hat x & \mapsto & \hat y^{-1} \hat x\hat y \hat t_2 \hat t_3 \hat t_2^{-1} \hat y^{-1},\\
\hat y & \mapsto & \hat y,\\
\hat t_1 & \mapsto & \hat t_3^{\,\hat t_2^{-1} \hat y^{-1}\hat x^{-1} \hat y^{-1} \hat x \hat y \hat t_2 \hat t_3 \hat t_2^{-1}},\\
\hat t_2 & \mapsto & \hat t_2,\\
\hat t_3 & \mapsto & \hat t_1^{\,\hat t_2 \hat t_3},
\end{array}
\right.
& \beta\coloneq
\left\{
\begin{array}{lll}
\hat x & \mapsto & \hat x,\\
\hat y & \mapsto & \hat x \hat y \hat t_2,\\
\hat t_1 & \mapsto & \hat t_1^{\,\hat y^{-1} \hat x^{-1} \hat y \hat t_2^{-1} \hat y \hat x \hat y \hat t_2},\\
\hat t_2 & \mapsto & \hat t_3,\\
\hat t_3 & \mapsto & \hat t_2^{\,\hat y^{-1} \hat x \hat y \hat t_2 \hat t_3}.
\end{array}
\right.
\end{array}
$$\medskip

\textbf{Example 3.} Let $F_3 \coloneq \langle a_{[1{\uparrow}3]} \mid \, \rangle$. Let $H$ be the kernel of the homomorphism $F_3 \to \langle \tau_1 \mid \tau_1^2 \rangle \times \langle \tau_2 \mid \tau_2^2 \rangle \times \langle \tau_3 \mid \tau_3^2 \rangle$ such that $a_k \mapsto \tau_k$ for all $k\in [1{\uparrow} 3]$. It is standard to see that $H$ is a free group of rank $17$. It can be shown that $H$ is a characteristic subgroup of $F_3$. 
\begin{enumerate}[(a).]
\item We identify $\Sigma_{0,1,3}$ and $F_3$ by putting $a_k \leftrightarrow t_k$ for all $k\in [1{\uparrow}3]$.
Notice that $\varrho = t_1t_2t_3 H$ has order $2$ in $G\coloneq \Sigma_{0,1,3} \slash H \simeq C_2\times C_2 \times C_2$. Hence, $\langle \varrho\rangle$ has index $4$ in $G$ and $b=4$. On the other hand, for all $k\in [1{\uparrow}3]$, $\varrho_k = t_kH$ has order $2$ in $G$. Hence, for all $k\in [1{\uparrow}3]$, $\langle \varrho_k \rangle$ has index $4$ in $G$ and the $k$-th puncture in $S_{0,1,3}$ lifts to $4$ punctures in $S_{g',b,q}$.
Thus, $q= 12$.
Since $\Sigma_{g',b,q}$ has rank $2g'+b-1+q$ and $H$ has rank $17$, we have $2g'+4-1+12 = 17$ and $g' = 1$. Hence, $\AM_{0,1,3} \hookrightarrow \AM_{1,1,3}$.
\item We identify $\Sigma_{1,1,1}$ and $F_3$ by putting $a_1 \leftrightarrow x_1, a_2 \leftrightarrow y_1$ and $a_3 \leftrightarrow t_1$.
Notice that $\varrho = [x_1,y_1] t_1 H$ has order $2$ in $G\coloneq \Sigma_{1,1,1} \slash H\simeq C_2\times C_2 \times C_2$. Hence, $\langle \varrho\rangle$ has index $4$ in $G$ and $b=4$. On the other hand, $\varrho_1 = t_1H$ has order $2$ in $G$. Hence, $\langle \varrho_1 \rangle$ has index $4$ in $G$ and the puncture in $S_{1,1,1}$ lifts to $4$ punctures in $S_{g',b,q}$.
Thus, $q= 4$.
Since $\Sigma_{g',b,q}$ has rank $2g'+b-1+q$ and $H$ has rank $17$, we have $2g'+4-1+4 = 17$ and $g' = 5$. Hence, $\AM_{1,1,1} \hookrightarrow \AM_{5,1,3}$.
\end{enumerate}

\section{Proofs of Theorem~\ref{thm:principal} and Theorem~\ref{thm:inclusio}}

\begin{definition}\label{def:t-lliure}
An element of $\Sigma_{g,1,p}$ is said to be {\it $t$-squarefree}
if, in its reduced expression, no two consecutive terms in $t_{[1{\uparrow}p]}\vee \overline t_{[1{\uparrow}p]}$ are equal; for example: $x_1x_1t_2t_3$ is $t$\d1squarefree;  $x_1t_2t_2y_1$ is non-$t$\d1squarefree.
\end{definition}

To proof Theorem~\ref{thm:principal} we need the following theorem.

\begin{theorem}\label{thm:t-sqfree}
For every $\phi \in \AM_{g,1,p}$, the elements of $x^\phi_{[1{\uparrow}g]}\vee y^\phi_{[1{\uparrow}g]}\vee t^\phi_{[1{\uparrow}p]}$ are $t$-squarefree.
\end{theorem}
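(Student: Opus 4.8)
The plan is to argue geometrically, via the identification $\M_{g,1,p}\simeq\AM_{g,1,p}$ recorded in Section~\ref{sec:intro}, reducing the statement to a single assertion about embedded loops. Since $t$-squarefreeness is a property of one element at a time, it suffices to treat each of $x_i^\phi$, $y_i^\phi$, $t_k^\phi$ separately.

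Fix, for each generator $v$ in the basis $x_{[1{\uparrow}g]}\vee y_{[1{\uparrow}g]}\vee t_{[1{\uparrow}p]}$ of $\Sigma_{g,1,p}=\pi_1(S_{g,1,p},\ast)$, an embedded based loop $\ell_v$ at the basepoint $\ast\in\partial S_{g,1,p}$ with $[\ell_v]=v$. Such loops exist: in the usual fundamental-polygon model of $S_{g,1,p}$ the curves $x_i$ and $y_i$ are carried by simple arcs, and each $t_k$ is carried by a small simple loop encircling the $k$-th puncture joined to $\ast$ by two disjoint embedded arcs. Given $\phi\in\AM_{g,1,p}$, realize it by a homeomorphism $f$ of $S_{g,1,p}$ fixing $\partial S_{g,1,p}$ pointwise; then $f(\ell_v)$ is again an embedded based loop at $\ast$, and $[f(\ell_v)]=v^\phi$. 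So Theorem~\ref{thm:t-sqfree} reduces to the claim that every embedded based loop at $\ast$ represents a $t$-squarefree element of $\Sigma_{g,1,p}$.

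To prove that claim I would take pairwise-disjoint closed once-punctured-disc neighbourhoods $D_k$ of the punctures, write $c_k=\partial D_k$, and put the embedded loop $\ell$ in minimal position with respect to $\bigcup_k c_k$ (no bigons; in particular no component of $\ell\cap D_k$ boundary-parallel in $D_k$). Reading off the reduced word of $[\ell]$ using the dual arc system in which the arc dual to $t_k$ is a slit from the $k$-th puncture to $\ast$ — so that its two sides appear as \emph{adjacent} sides of the fundamental polygon, meeting at the puncture-vertex — a subword $t_k^{\pm 2}$ forces a subarc of $\ell$ that, in minimal position, cuts off a once-punctured disc whose core the loop runs around twice. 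An embedded closed curve in an annulus is inessential or core-parallel, never running twice around the core, so no such subarc exists; hence $[\ell]$ contains no $t_k^{\pm 2}$ subword, which is precisely $t$-squarefreeness.

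I expect the last step to be the only real obstacle: making the word/geometry dictionary precise — the exact correspondence between the $t_k$-syllables of $[\ell]$ and the crossings of the dual slits, and the way the puncture-vertex of the polygon turns two consecutive same-sign crossings into an innermost once-punctured disc — and then running the no-bigon/innermost-disc argument cleanly. A purely algebraic route, by induction on the length of $\phi$ in a generating set of $\AM_{g,1,p}$, seems harder to set up: applying a braid-type generator $\sigma_k$ to a $t$-squarefree word can create $t_{k+1}^{\pm 2}$ subwords (e.g. $\sigma_k(t_{k+1}t_k)=\overline{t_{k+1}}t_k t_{k+1}^2$), so one would need a much stronger inductive hypothesis constraining the joint $t$-syllable structure of $(x_i^\phi,y_i^\phi,t_k^\phi)$ that guarantees these subwords always cancel, and identifying the correct such hypothesis would be the crux.
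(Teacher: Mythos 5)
Your route is genuinely different from the paper's. The paper never realizes $\phi$ by a homeomorphism: it works with the groupoid of surface word sets (Section~\ref{sec:mccool}), uses McCool's generation theorem to show that every $\phi\in\AM_{g,1,p}$ preserves an explicit ordering of $\partial\Sigma_{g,1,p}$ (Proposition~\ref{prop:order}), and then traps all non-$t$-squarefree ends inside a $\phi$-invariant union of intervals which the ends $\overline{t}_p(\overline{z}_1^{\,\infty})$ and $a(z_1^{\infty})$ avoid (Lemma~\ref{lemma_rep} and Theorem~\ref{th:f_sq}). That order-preservation statement is exactly the ``much stronger inductive hypothesis'' your last paragraph says an induction on Nielsen generators would require; so you diagnose the difficulty of the algebraic route correctly but do not see that it has a solution, and that solution is the content of Sections~\ref{sec:ends}--\ref{sec:free}.

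The geometric argument you propose has a genuine gap at precisely the step you flag. A minor issue first: minimal position with respect to the circles $c_k=\partial D_k$ does not make the word read off the slit system reduced; you need $\ell$ to minimize intersection with each slit $\delta_k$ simultaneously while staying embedded, which is a different (though arrangeable) condition. The serious problem is that a subword $t_k^{2}$ does \emph{not} produce ``a once-punctured disc whose core the loop runs around twice''. What it produces is a subarc $\alpha$ of $\ell$ joining two same-sign crossings $Q_1,Q_2$ of $\delta_k$ such that $\alpha$ together with the segment of $\delta_k$ between $Q_1$ and $Q_2$ bounds an embedded once-punctured disc $D_0$. The loop $\ell$ is not contained in any annulus about the puncture, and the complementary arc $\ell\setminus\alpha$ is a priori free to enter and leave $D_0$ through that slit segment without ever meeting $\alpha$, so embeddedness of $\ell$ is not contradicted by the annulus fact you invoke; a further argument (a parity count of crossings of the slit segment, or an innermost-disc induction over all such triangles) is required, and that is where the whole content of the theorem lives. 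Indeed, by change of coordinates the embedded based loops encircling a single puncture are exactly the $\AM_{g,1,p}$-orbit of the $t_k$, so your reduction to ``every embedded based loop is $t$-squarefree'' is essentially a restatement of Theorem~\ref{thm:t-sqfree} rather than a reduction of it. This geometric route is the one behind Larue's original proof for $g=0$; the paper's stated purpose is to replace it by an algebraic argument, so if you pursue your plan you must carry out the minimal-position and innermost-disc analysis in full rather than cite an annulus lemma that does not apply.
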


\begin{proof}{\it (of Theorem~\ref{thm:principal})}
If $p = 0$, then $\psi$ is the identity and nothing needs to be said.

Suppose $p \geq 1$. Let $a \in x_{[1{\uparrow}g]} \vee y_{[1{\uparrow}g]} \vee t_{[1{\uparrow}p]}$. If $\phi$ is an element of the kernel of $\psi:\AM_{g,1,p} \to \AM_{g,1,p^{(d)}}$, then $a^\phi$ and $a$ have the same image in $\Sigma_{g,1,p^{(d)}}$. On the other hand, by Theorem~\ref{thm:t-sqfree}, $a^\phi$ is $t$-squarefree. Hence, $a^\phi$ has the same normal form in $\Sigma_{g,1,p}$ as in $\Sigma_{g,1,p^{(d)}}$. Thus, $a^\phi = a$.
\end{proof} \medskip

Let $N_d$ be the normal closure of $t_1^d, t_2^d, \ldots, t_p^d$ in $\Sigma_{g,1,p}$. Then \[\Sigma_{g,1,p} \slash N_d = \Sigma_{g,1,p^{(d)}} = \langle x_{[1{\uparrow}g]} \vee y_{[1{\uparrow}g]} \vee \tau_{[1{\uparrow}p]} \mid \tau_{1}^d, \tau_{2}^d, \ldots, \tau_{p}^d\rangle.\]
Let $H \leq \Sigma_{g,1,p}$ be a normal subgroup of finite index such that $N_d \leq H$. Notice $H \slash N_d \leq \Sigma_{g,1,p^{(d)}}$. We set $$\AM_{g,1,p}(H) = \{\phi \in \AM_{g,1,p}\mid H^{\phi} = H\},$$ and $$\AM_{g,1,p^{(d)}}(H \slash N_d) = \{\tilde \phi \in \AM_{g,1,p^{(d)}}\mid ( H \slash N_d)^{\tilde \phi} =  H \slash N_d \}.$$

\begin{proposition}\label{prop:inclusio}
With the above notation suppose $(g,p,d) \neq (0,2,2)$. Let $\phi \in \AM_{g,1,p}(H)$. Then $\psi(\phi) \in \AM_{g,1,p^{(d)}}(H\slash N_d)$. If $\psi(\phi)|_{H\slash N_d} = 1$, then $\phi = 1$.
\end{proposition}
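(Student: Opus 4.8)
The plan is to reduce Proposition~\ref{prop:inclusio} to a statement about how an automorphism that fixes things modulo $N_d$ and preserves $H$ must act on the original group $\Sigma_{g,1,p}$, and then to invoke the $t$-squarefreeness machinery of Theorem~\ref{thm:t-sqfree} exactly as in the proof of Theorem~\ref{thm:principal}. The first claim, that $\psi(\phi)\in\AM_{g,1,p^{(d)}}(H\slash N_d)$, should be essentially formal: $\psi$ is induced by the quotient $\Sigma_{g,1,p}\onto\Sigma_{g,1,p^{(d)}}=\Sigma_{g,1,p}\slash N_d$; since $\phi$ preserves $H$ and $N_d\leq H$ (with $N_d$ itself $\phi$-invariant, being the normal closure of the $t_k^d$, which $\phi$ permutes up to conjugacy), the induced automorphism $\psi(\phi)$ of $\Sigma_{g,1,p^{(d)}}$ carries $H\slash N_d$ to itself. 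So the content is entirely in the second sentence.

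For the second part, suppose $\psi(\phi)|_{H\slash N_d}=1$. Let $a\in x_{[1{\uparrow}g]}\vee y_{[1{\uparrow}g]}\vee t_{[1{\uparrow}p]}$. The strategy is to produce, for each such $a$, an element $h_a\in H$ (roughly, an appropriate power or product built from $a$ lying in the finite-index subgroup $H$) on which $\phi$ acts trivially because $\psi(\phi)$ fixes $H\slash N_d$ pointwise and $h_a$ (or its relevant building block) is $t$-squarefree, hence has the same normal form in $\Sigma_{g,1,p}$ as in $\Sigma_{g,1,p^{(d)}}$. Concretely: $\phi(h_a)$ and $h_a$ have the same image in $\Sigma_{g,1,p^{(d)}}$; by Theorem~\ref{thm:t-sqfree} the words $a^\phi$ are $t$-squarefree, so $\phi(h_a)$ is $t$-squarefree (products and powers of $t$-squarefree elements that avoid the relevant cancellation must be arranged carefully here), whence $\phi(h_a)=h_a$ in $\Sigma_{g,1,p}$. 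From enough such identities — one should get $a^\phi=a$ for each generator $a$ — one concludes $\phi=1$. This is where the hypothesis $(g,p,d)\neq(0,2,2)$ must enter: it is presumably needed to guarantee that $H$ (being of finite index, with $N_d\leq H$) is large enough / the quotient $G$ is rich enough that the chosen elements $h_a$ genuinely distinguish $\phi$ from the identity; in the excluded case the relevant group is too small (e.g.\ an abelian or dihedral quotient) for this argument to pin down $\phi$.

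The main obstacle I expect is the bookkeeping needed to turn ``$\psi(\phi)$ fixes $H\slash N_d$ pointwise'' into ``$a^\phi=a$ for each free generator $a$ of $\Sigma_{g,1,p}$''. The subtlety is that $a$ itself need not lie in $H$, so one cannot directly test $\phi$ on $a$; one must instead work with elements of $H$ that are $t$-squarefree and from which the action of $\phi$ on the generators can be recovered, and one must check that $\phi$ applied to these elements does not introduce $t$-cancellation that would spoil the normal-form comparison (this is precisely the reason Theorem~\ref{thm:t-sqfree} is stated for the full list $x^\phi_{[1{\uparrow}g]}\vee y^\phi_{[1{\uparrow}g]}\vee t^\phi_{[1{\uparrow}p]}$ and not just for a single generator). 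Handling the case distinction forced by $(g,p,d)\neq(0,2,2)$, and verifying that the low-complexity exceptional configuration really is the only one where the $t$-squarefree test fails to determine $\phi$, is the delicate point; everything else is the same normal-form argument already used for Theorem~\ref{thm:principal}.
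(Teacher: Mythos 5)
Your first paragraph (that $\psi(\phi)\in\AM_{g,1,p^{(d)}}(H\slash N_d)$) is correct and matches the paper. For the main claim, however, you propose a genuinely different route — manufacturing $t$-squarefree elements $h_a\in H$ and comparing normal forms in $\Sigma_{g,1,p}$ — and that route has a real gap, whereas the paper argues differently. The paper works entirely inside the quotient $\Sigma_{g,1,p^{(d)}}$ and proves the stronger intermediate statement $\psi(\phi)=1$, after which Theorem~\ref{thm:principal} (the only place $t$-squarefreeness is used, and there only as a black box) yields $\phi=1$. Concretely: choose $r\neq 0$ with $(\Pi_{i}[x_i,y_i]\Pi t_{[1{\uparrow}p]})^{r}\in H$; by normality of $H$ the element $\overline \tau_k(\Pi_{i}[x_i,y_i]\Pi \tau_{[1{\uparrow}p]})^{r}\tau_k$ lies in $H\slash N_d$ and is therefore fixed, so $\tau_k^{\psi(\phi)}\overline\tau_k$ centralizes $(\Pi_{i}[x_i,y_i]\Pi \tau_{[1{\uparrow}p]})^{r}$ and hence $\tau_k^{\psi(\phi)}=(\Pi_{i}[x_i,y_i]\Pi \tau_{[1{\uparrow}p]})^{r'}\tau_k$; the constraint that this be conjugate to some $\tau_{k'}$ forces $r'=0$ unless $(g,p,d)=(0,2,2)$. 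For $a\in x_{[1{\uparrow}g]}\vee y_{[1{\uparrow}g]}$ one takes $a^{s}\in H$ and extracts roots in the free product $\Sigma_{g,1,0}\ast C_d^{\ast p}$.

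The gaps in your version are these. First, for the puncture generators there is no usable $h_a$: the obvious element $t_k^{d}\in N_d\leq H$ maps to $1$ in $H\slash N_d$ and carries no information, and any other element of $H$ involving $t_k$ is a genuine product of generators, so Theorem~\ref{thm:t-sqfree} — which controls only the images of single generators — does not make $h_a^{\phi}$ $t$-squarefree (already $(a^{\phi})^{2}$ can fail to be $t$-squarefree when $a^{\phi}$ begins and ends with the same $t_k^{\pm1}$). You flag this difficulty but do not resolve it, and it is precisely where the normal-form comparison you rely on breaks down; the paper's centralizer-plus-conjugacy argument is designed to avoid it. Second, your account of the hypothesis $(g,p,d)\neq(0,2,2)$ is not correct: it has nothing to do with $H$ being ``large enough'' ($H$ is an arbitrary finite-index normal subgroup containing $N_d$); it is needed only to rule out the degeneracy in $\Sigma_{0,1,2^{(2)}}=C_2\ast C_2$, where $(\tau_1\tau_2)^{r'}\tau_k$ can be conjugate to some $\tau_{k'}$ with $r'\neq 0$ (for instance $\tau_1\tau_2\cdot\tau_2=\tau_1$), so that \eqref{eq:01} fails to force $r'=0$.
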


\begin{proof}
Since $N_d$ and $H$ are $\phi$\d1invariant, the restriction of $\psi(\phi) \in \AM_{g,1,p^{(d)}}$ to $H\slash N_d$ is an element of $\Aut(H\slash N_d)$. Hence, $\psi(\phi)\in \AM_{g,1,p^{(d)}}(H\slash N_d)$.\medskip

Since $H$ has finite index in $\Sigma_{g,1,p}$, there exists $r\in \integers,\, r\neq 0,$ such that $$(\Pi_{i\in [1{\uparrow}g]} [x_i,y_i]\Pi t_{[1{\uparrow}p]})^{r} \in H.$$ Fix $k \in [1{\uparrow}p]$. Since $H$ is normal in $\Sigma_{g,1,p}$, we see $$\overline t_k(\Pi_{i\in [1{\uparrow}g]} [x_i,y_i]\Pi t_{[1{\uparrow}p]})^{r} t_k \in H.$$ If $\psi(\phi)|_{H\slash N_d} = 1$, in $\Sigma_{g,1,p^{(d)}}$,
\begin{align*}
& \overline \tau_k(\Pi_{i\in [1{\uparrow}g]} [x_i,y_i]\Pi \tau_{[1{\uparrow}p]})^{r} \tau_k\\
= & (\overline \tau_k(\Pi_{i\in [1{\uparrow}g]} [x_i,y_i]\Pi \tau_{[1{\uparrow}p]})^{r} \tau_k)^{\psi(\phi)}\\
= & \overline \tau_k^{\,\psi(\phi)} (\Pi_{i\in [1{\uparrow}g]} [x_i,y_i]\Pi \tau_{[1{\uparrow}p]})^{r} \tau_k^{\psi(\phi)}.
\end{align*}

Hence, in $\Sigma_{g,1,p^{(d)}}$, $\tau_k^{\psi(\phi)} \overline \tau_k$ commutes with $(\Pi_{i\in [1{\uparrow}g]} [x_i,y_i]\Pi \tau_{[1{\uparrow}p]})^{r}$. Then $\tau_k^{\psi(\phi)} \overline \tau_k \in \langle \Pi_{i\in [1{\uparrow}g]} [x_i,y_i]\Pi \tau_{[1{\uparrow}p]} \rangle$, and,
\begin{equation}\label{eq:01}
\tau_k^{\psi(\phi)} = (\Pi_{i\in [1{\uparrow}g]} [x_i,y_i]\Pi \tau_{[1{\uparrow}p]})^{r'}\tau_k,
\end{equation}
for some $r'\in \integers$. Recall $[\tau_k^{\psi(\phi)}] = [\tau_{k'}]$, for some $k'\in [1{\uparrow}p]$. If $(g,p) \neq (0,1)$, and if $(g,p,d) \neq (0,2,2)$, then \eqref{eq:01} implies $r' = 0$ and $\tau_k^{\psi(\phi)} = \tau_k$. \medskip

Recall $\Sigma_{g,1,p^{(d)}} = \Sigma_{g,1,0} \ast C_d^{\ast p}$. Fix $a\in x_{[1{\uparrow}g]} \vee y_{[1{\uparrow}g]}$. Since $H$ has finite index in $\Sigma_{g,1,p}$, there exists $s\in \integers$ such that $a^{s} \in H$. If $\psi(\phi)|_{H \slash N_d} = 1$, then $(a^s)^{\psi(\phi)} = a^{s}$, and, $a^{\psi(\phi)} = a$.\medskip

Since $\Sigma_{g,1,p^{(d)}} = \Sigma_{g,1,0} \ast C_d^{\ast p}$, $a^{\psi(\phi)} = a$ for all $a\in x_{[1{\uparrow}g]} \vee y_{[1{\uparrow}g]}$, and, $\tau_k^{\psi(\phi)} = \tau_k$ for all $k\in [1{\uparrow}p]$, we see $\psi(\phi) = 1$. By Theorem~\ref{thm:principal}, $\phi = 1$.
\end{proof}\medskip

\begin{lemma}\label{lema:normal}
With the notation above Theorem~\ref{thm:inclusio}, if $\Sigma_{g',b,q}$ is $\AM_{g,1,p}$-invariant then the normal closure of $t_1^{d}, t_2^{d}, \ldots, t_p^{d}$ in $\Sigma_{g,1,p}$ equals the normal closure of $\hat t_1, \hat t_2, \cdots ,\hat t_q$ in $\Sigma_{g',b,q}$.
\end{lemma}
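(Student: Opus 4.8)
The plan is to reduce the statement to an equality of generating sets and then exploit that each $t_k^d$ is centralized by $t_k$. Concretely: writing $N_d$ for the normal closure of $t_1^d,\dots,t_p^d$ in $\Sigma_{g,1,p}$ and $M$ for the normal closure of $\hat t_1,\dots,\hat t_q$ in $\Sigma_{g',b,q}$, each of these subgroups is generated by the set of all conjugates of its listed generators (by the ambient group, $\Sigma_{g,1,p}$ resp.\ $\Sigma_{g',b,q}$), so it is enough to show
\[
\{\,(t_k^d)^g \mid k\in[1{\uparrow}p],\ g\in\Sigma_{g,1,p}\,\}=\{\,\hat t_j^{\,h}\mid j\in[1{\uparrow}q],\ h\in\Sigma_{g',b,q}\,\}.
\]
First I would dispose of $p=0$ (then $q=0$ and both normal closures are trivial), so assume $p\geq1$. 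The $\AM_{g,1,p}$-invariance of $\Sigma_{g',b,q}$ gives $d_1=\dots=d_p=d$, hence $t_k^d=t_k^{\,d_k}\in\Sigma_{g',b,q}$; and, by the choice of basis recalled before Theorem~\ref{thm:inclusio}, $\{\hat t_1,\dots,\hat t_q\}$ is the disjoint union over $k\in[1{\uparrow}p]$ of the sets $\{(t_k^d)^{u_{1,k}},\dots,(t_k^d)^{u_{m_k,k}}\}$, where for each $k$ the cosets $\langle\varrho_k\rangle\rho_{1,k},\dots,\langle\varrho_k\rangle\rho_{m_k,k}$ partition $G$, with $\varrho_k=t_k\Sigma_{g',b,q}$ and $\rho_{i,k}=u_{i,k}\Sigma_{g',b,q}$.

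The inclusion $\supseteq$ in the displayed equality is then immediate, since $\hat t_j=(t_k^d)^{u_{i,k}}$ gives $\hat t_j^{\,h}=(t_k^d)^{u_{i,k}h}$ with $u_{i,k}h\in\Sigma_{g,1,p}$. For $\subseteq$ I would argue as follows. Fix $k$ and $g\in\Sigma_{g,1,p}$. Because $t_k$ centralizes $t_k^d$, the conjugate $(t_k^d)^g$ is unchanged if $g$ is multiplied on the left by a power of $t_k$, so it depends only on the coset $\langle t_k\rangle g$. Its image in $G$ is the coset $\langle\varrho_k\rangle(g\Sigma_{g',b,q})$, which must be one of the parts $\langle\varrho_k\rangle\rho_{i,k}$ of the above partition; hence $g\Sigma_{g',b,q}=\varrho_k^{\,n}\rho_{i,k}$ for some $n\in\integers$, i.e.\ $g=t_k^{\,n}u_{i,k}h$ with $h\in\Sigma_{g',b,q}$. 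A short calculation, using $t_k^{-n}t_k^d t_k^n=t_k^d$ and $(t_k^d)^{u_{i,k}}=\hat t_j$, then gives $(t_k^d)^g=\hat t_j^{\,h}$, which lies in the right-hand set. This establishes the equality of sets, and therefore $N_d=M$.

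The argument is essentially bookkeeping, and the one delicate point, really the crux, is the translation between cosets of $\langle t_k\rangle$ in $\Sigma_{g,1,p}$ and cosets of $\langle\varrho_k\rangle$ in $G$: this is exactly what the chosen representatives $u_{i,k}$ are designed for, and what makes it work is that $t_k^d$ is central in the cyclic group $\langle t_k\rangle$, so the excess factor $t_k^{\,n}$ disappears under conjugation. I note that the hypotheses $d\geq2$ and $(g,p)\neq(0,2)$ play no role here; the $\AM_{g,1,p}$-invariance is used only to produce the single exponent $d$.
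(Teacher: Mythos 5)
Your proof is correct and follows essentially the same route as the paper's: both reduce to the displayed identity expressing $\{\hat t_1,\ldots,\hat t_q\}$ as the conjugates $(t_k^d)^{u_{i,k}}$, and both handle the nontrivial inclusion by writing an arbitrary $w\in\Sigma_{g,1,p}$ as $t_k^{\,r}u_{j,k}v$ with $v\in\Sigma_{g',b,q}$ via the coset decomposition of $G$ by $\langle\varrho_k\rangle$, absorbing the $t_k^{\,r}$ factor because it centralizes $t_k^d$. Your remark that only the equality $d_1=\cdots=d_p=d$ is actually used is consistent with the paper.
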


\begin{proof} Recall \begin{equation}\label{eq:normal}
\{\hat t_1, \hat t_2, \ldots, \hat t_q\} = \bigvee_{k\in [1{\uparrow}p]} \{(t_k^{d})^{u_{1,k}}, (t_k^{d})^{u_{2,k}}, \ldots, (t_k^{d})^{u_{m\slash d,k}}\}.
\end{equation}
By \eqref{eq:normal}, the normal closure of $\hat t_1, \hat t_2, \cdots ,\hat t_q$ in $\Sigma_{g',b,q}$ is a subgroup of the normal closure of $t_1^{d}, t_2^{d}, \ldots t_p^{d}$ in $\Sigma_{g,1,p}$.

Let $k \in [1{\uparrow}p]$ and $w \in \Sigma_{g,1,p}$. By \eqref{eq:normal}, it is enough to proof $(t_k^{d})^w = (t_k^{d})^{u_{j,k} v}$ for some $j \in [1{\uparrow}(m \slash d)]$ and $v\in \Sigma_{g',b,q}$. Recall $G = \Sigma_{g,1,p} \slash \Sigma_{g',b,q}$, $\varrho_k = t_k \Sigma_{g',b,q} \in G$ and $G = \langle \varrho_k\rangle\rho_{1,k} \vee \langle \varrho_k\rangle\rho_{2,k} \cdots \vee \langle \varrho_k\rangle\rho_{m \slash d,k}$, where $\rho_{j,k} = u_{j,k} \Sigma_{g',b,q} \in G$ for all $j \in [1{\uparrow}(m \slash d)]$. Let $j \in [1{\uparrow}(m \slash d)]$ such that $w \Sigma_{g',b,q} \in \langle\varrho_k\rangle\rho_{j,k}$. Let $r\in [1{\uparrow} d]$ such that $w \Sigma_{g',b,q} = \varrho_k^r \rho_{j,k} = t_k^r u_{j,k}\Sigma_{g',b,q}$. Then $w = t_k^r u_{j,k}v$, for some $v \in \Sigma_{g',b,q}$ and $(t_k^{d_k})^w = (t_k^{d_k})^{t_k^r u_{j,k} v} = (t_k^{d_k})^{u_{j,k} v}$.
\end{proof}

\begin{proof}{\it (of Theorem~\ref{thm:inclusio})}
Recall $N_d$ is the normal closure in $\Sigma_{g,1,p}$ of $t_1^{d}, t_2^{d}, \ldots, t_p^{d}$. By Lemma~\ref{lema:normal}, $N_d$ is the normal closure in $\Sigma_{g',b,q}$ of $\hat t_1, \hat t_2, \cdots, \hat t_q$. Hence, $\Sigma_{g',b,0} = \Sigma_{g',b,q} \slash N_d$. We identify $\Sigma_{g',b,0}$ with $\Sigma_{g',1,b-1}$ by identifying $z_k$ with $t_k$ for all $k\in [1{\uparrow}(b-1)]$. Hence, $\Sigma_{g',1,(b-1)} = \Sigma_{g',b,q} \slash N_d$. Since $\Sigma_{g,1,p^{(d)}} = \Sigma_{g,1,p} \slash N_d$, the natural homomorphism $\Sigma_{g,1,p} \to \Sigma_{g,1,p^{(d)}}$ restricts to the natural homomorphism $\Sigma_{g',b,q} \to \Sigma_{g',1,(b-1)}$.

Let $\phi \in \AM_{g,1,p}$. Since $\psi: \AM_{g,1,p} \to \AM_{g,1,p^{(d)}}$ is given by the natural homomorphism $\Sigma_{g,1,p} \to \Sigma_{g,1,p^{(d)}}$, we see $\psi(\phi):\Sigma_{g,1,p^{(d)}} \to \Sigma_{g,1,p^{(d)}}$ completes the following commutative square 
$$
\begin{array}{ccc}
\Sigma_{g,1,p} & \stackrel{\phi}{\longrightarrow} & \Sigma_{g,1,p}\\
\downarrow & & \downarrow\\
\Sigma_{g,1,p^{(d)}} & \stackrel{\psi(\phi)}{\longrightarrow} & \Sigma_{g,1,p^{(d)}}
\end{array}
$$
where the vertical arrows are the natural homomorphisms. Since $\Sigma_{g',1,(b-1)} = \Sigma_{g',b,q} \slash N_d$ and $\Sigma_{g',b,q}$ is $\AM_{g,1,p}$-invariant, by Proposition~\ref{prop:inclusio}, there exists the restriction $\psi(\phi)|_{\Sigma_{g',1,(b-1)}}: \Sigma_{g',1,(b-1)} \to \Sigma_{g',1,(b-1)}$. Notice $\psi(\phi)|_{\Sigma_{g',1,(b-1)}}: \Sigma_{g',1,(b-1)} \to \Sigma_{g',1,(b-1)}$ completes the following commutative square 
$$
\begin{array}{ccc}
\Sigma_{g',b,q} & \stackrel{\phi|_{\Sigma_{g',b,q}}}{\longrightarrow} & \Sigma_{g',b,q}\\
\downarrow & & \downarrow\\
\Sigma_{g',1,(b-1)} & \stackrel{\psi(\phi)|_{\Sigma_{g',1,(b-1)}}}{\longrightarrow} & \Sigma_{g',1,(b-1)}
\end{array}
$$
where the vertical arrows are the natural homomorphisms. Since the second homomorphism of \eqref{eq:suc_ex} is given by eliminating the last $q$ punctures; that is, by the natural homomorphism $\Sigma_{g',b,q} \to \Sigma_{g',1,(b-1)}$, in the composition of \eqref{eq:suc_ex} we have $$\phi \mapsto \phi|_{\Sigma_{g',b,q}} \mapsto \psi(\phi)|_{\Sigma_{g',1,(b-1)}}.$$
By Proposition~\ref{prop:inclusio}, if $\psi(\phi)|_{\Sigma_{g',1,(b-1)}} = 1$, then $\phi = 1$.
\end{proof}

The rest of the paper is dedicated to proof Theorem~\ref{thm:t-sqfree}. The proof is similar to \cite[7.6 Corollary]{BD}. Notice Theorem~\ref{thm:t-sqfree} is trivial if $p = 0$.

\section{McCool's Groupoid}
\label{sec:mccool}
For the rest of the paper we suppose $p \geq 1$.\medskip

Let $n\coloneq 2g+p$, and, let $F_n$ be the free group on $X$, where $X$ is a set with $n$ elements.

\begin{notation}
Let $w\in F_n$. In this section we will denote by $[w]$ the cyclic word of $w$.
\end{notation}

\begin{definitions}
Let $T$ be a set of words and cyclic words of $F_n$. Suppose the elements of $T$ are reduced and cyclically reduced, respectively. We define the {\it Whitehead graph of $T$} as the graph with vertex set $X\vee \overline{X}$, and, one edge from $a \in X\vee \overline{X}$ to $b \in X\vee \overline{X}$ for every subword $\overline{a}b$ which appears in $w$ or $[u]$, where $w$ and $[u]$ are elemets of $T$. We say that $a$ is the initial vertex and $b$ is the terminal vertex of the edges corresponding to the subword $\overline a b$. Repetitions have to be considered. For example, since the subword $\overline a b$ appears twice in $\overline a b \overline a b$, the Whitehead graph of $\{\overline a b \overline a b\}$ has 2 edges from $a$ to $b$ (and one edge from $\overline b$ to $\overline a$). Notice that the cyclic word $[a]$ produces an edge from $\overline a$ to $a$ in the Whitehead graph.\medskip

We say that $T$ is a {\it surface word set} if the Whitehead graph of $T$ is an oriented segment, that is, the Whitehead graph of $T$ is connected with exactly $2n-1$ edges, every vertex but one is the {\it initial vertex} of exactly one edge, and, every vertex but one is the {\it terminal vertex} of exactly one edge.
\end{definitions}

\begin{example}\label{ex:whitehead}
Let $F_4 \coloneq \langle a,b,c,d \mid \, \rangle$.
\begin{enumerate}[(i).]
\item\label{ex:whitehead_1} Let $T \coloneq \{\overline a dc\overline b, [\,\overline d b], [\overline c a]\}$. The Whitehead graph of $T$ is $$\overline a \to \overline c \to \overline b \to \overline d \to c \to a \to d \to b.$$ Hence, $T$ is a surface word set.
\item Let $T \coloneq \{\overline a dc\overline b, \overline d b, [\overline ca]\}$. The Whitehead graph of $T$ is $$\overline a \to \overline c \to \overline b \phantom{\to} \overline d \to c \to a \to d \to b.$$ Hence, $T$ is not a surface word set.
\item Let $T \coloneq \{\overline a dc\overline b, d c, [\,\overline d b], [\overline c a]\}$. The Whitehead graph of $T$ is $$\overline a \to \overline c \to \overline b \to \overline d \rightrightarrows c \to a \to d \to b.$$ Hence, $T$ is not a surface word set.
\end{enumerate}
\end{example}

We illustrate the following remarks with examples in $F_4 = \langle a, b, c, d \mid \,\rangle$.
\begin{remarks} Let $T$ be a surface word set.
\begin{enumerate}
\item The Whitehead graph of $T$ defines a sequence $a_{[1{\uparrow}2n]}$ with underling set $X\vee \overline X$ such that for all $i\in [1{\uparrow}(2n-1)]$, the Whitehead graph of $T$ has exactly one edge with initial vertex $a_i$ and terminal vertex $a_{i+1}$, equivalently, $\overline a_i a_{i+1}$ is a subword of exactly one element of $T$. We say that $a_{[1{\uparrow}2n]}$ is the {\it associated sequence} of $T$. 

In Example~\ref{ex:whitehead}\eqref{ex:whitehead_1}, the associated sequence of $T$ is $(\overline a, \overline c, \overline b, \overline d, c, a, d, b)$.

\item We can recover $T$ from the associated sequence of $T$. The process to recover $T$ from its associated sequence is the invers process to construct the Whitehead graph. We give two examples below. From this process, it is easy to see that $T$ has exactly one word, and, all other elements of $T$ are cyclic words.

In $F_4$, from the sequence $(a,b,c,d,\overline a, \overline b, \overline c, \overline d)$ we have the surface word set $\{a \overline b c \overline d \,\overline a b \overline c d\}$, and, from the sequence $(a, b, c, d, \overline d, \overline c, \overline b, \overline a)$ we have the surface word set $\{a, [b\overline a], [c \overline b\,], [d \overline c], [\,\overline d\,]\}$.

\item Let $p$ be the cardinality of $T$ minus one. We say that $T$ is a $(g,p)$-surface word set, where $g=(n-p)\slash 2$.  By induction on $n$, it can be seen that $n\geq p$ and $n-p$ is even. Hence, $g\in \naturals$.
\end{enumerate}
\end{remarks}

\begin{definition}
Let $\phi \in \Aut(F_n)$.

We say that $\phi$ is a {\it type-$1$ Nielsen automorphism} if $\phi$ restricts to a permutation of $X\vee \overline X$.

We say that $\phi$ is a {\it type-$2$ Nielsen automorphism} if there exists $a,b\in X\vee \overline X$ such that $a\neq b, \overline b$ and \medskip

$
\phi:=\left\{\begin{array}{lll}
a & \mapsto & ab,\\
c & \mapsto & c \quad \textrm{ for all } c\in X,\, c\neq a^{\pm 1}.
\end{array}\right.$
\newline
We denote $\phi$ by $(a \mapsto ab)$ or $(\overline a \mapsto \overline b \overline a)$.
\end{definition}

\begin{definition}
Let $\G_{g,p}$ be the groupoid with objects $(g,p)$-surface word sets, and, given $T_1,\, T_2$ two $(g,p)$-surface word sets $$\Hom(T_1,T_2)\coloneq \{\phi\in \Aut(F_n)\mid T_1^\phi = T_2\},$$
where $T_1^\phi \coloneq \{w^\phi,[u^{\phi}]\mid w,[u]\in T_1\}$. Here, $w^\phi$ is reduced and $[u^\phi]$ is cyclically reduced. Hence, $[v] = [u^\phi]$ means that $v$ and $u^\phi$ are conjugated.\medskip

We say that $(T_1,T_2,\phi)\in \Hom(T_1, T_2)$ is a {\it type-$1$ Nielsen} of $\G_{g,p}$ if $\phi$ is a type-$1$ Nielsen automorphism. Similarly, for type-$2$ Nielsen automorphisms. We say that $(T_1,T_2,\phi)\in \Hom(T_1, T_2)$ is a Nielsen if it is either a type-$1$ Nielsen or a type-$2$ Nielsen.
\end{definition}

We illustrate the following remarks with examples in $F_4 = \langle a, b, c, d \mid \, \rangle$.
\begin{remark}
Let $(T_1,T_2,\phi)$ be a Nielsen of $\G_{g,p}$.
\begin{enumerate}\label{rem:N-seq}
\item\label{rem:N-seq_1} If $(T_1,T_2,\phi)$ is a type-$1$ Nielsen, then the associated sequence of $T_2$ is obtained from the associated sequence of $T_1$ by applying the permutation $\phi$ to every element of the sequence.

In $F_4$, let $T_1 = \{a\overline d\,\, \overline b c, [\,\overline a b], [\, \overline c d]\}$. Notice the associated sequence of $T_1$ is $(a, b, c, d, \overline b, \overline a, \overline d, \overline c)$. If $\phi\coloneq (a \mapsto \overline b, b \mapsto c, c \mapsto \overline a, d \mapsto \overline d)$, then the associated sequence of $T_2$ is $(\overline b, c, \overline a, \overline d, \overline c, b, d, a)$.
\item\label{rem:N-seq_2} Suppose $(T_1,T_2,\phi)$ is a type-$2$ Nielsen. Then $\phi = (a_i \mapsto b a_i)$ for some $i \in [1{\uparrow}2n]$, $b\in X\vee \overline X$ such that $a_i\neq  b,\overline b$. Since in the Whitehead graph of $T$ there are exactly $2n-1$ edges, there exists $w\in T_1$ or $[u]\in T_1$ such that applying $\phi$ to $w$ or $[u]$ produces a cancellation. If the cancellation appears from he subword $\overline a_{i-1}a_i$, then $b=a_{i-1}$. If the cancellation appears from the subword $\overline a_i a_{i+1}$, then $b = a_{i+1}$. Hence, either $\phi = (a_i\mapsto a_{i-1}a_i)$ for some $i \in [2{\uparrow}2n],\, a_i\neq \overline a_{i-1}$; or $\phi = (\overline a_i\mapsto \overline a_i \overline a_{i+1})$ for some $i \in [1{\uparrow}(2n-1)],\, a_i\neq \overline a_{i+1}$. In the former case the associated sequence of $T_2$ is obtained from the associated sequence of $T_1$ by moving $a_i$ from immediately after $a_{i-1}$ to immediately before $\overline a_{i-1}$. In the later case  the associated sequence of $T_2$ is obtained from the associated sequence of $T_1$ by moving $a_i$ from immediately before $a_{i+1}$ to immediately after $\overline a_{i+1}$.

In $F_4$, let $T_1 = \{a\overline b c \overline d \,\overline a b \overline c d\}$. Notice the associated sequence of $T_1$ is $(a, b, c, d, \overline a, \overline b, \overline c, \overline d)$. If $\phi\coloneq (b \mapsto ab)$, then the associated sequence of $T_2$ is $(a, c, d, b, \overline a, \overline b, \overline c, \overline d)$. In fact $(a\overline b c \overline d \,\overline a b \overline c d)^{(b \mapsto ab)} = a\overline b \,\overline a c \overline d  b \overline c d$. If $\phi\coloneq (\overline a \mapsto \overline a \overline b)$, then the associated sequence of $T_2$ is $(b, c, d, \overline a, \overline b, a, \overline c, \overline d)$. In fact $(a\overline b c \overline d \,\overline a b \overline c d)^{(\overline a \mapsto \overline a \overline b)} = ba\overline b c \overline d \,\overline a \,\,\overline c d$.
\end{enumerate}
\end{remark}

\begin{remark}
It is easy to see $\{\Pi_{i\in [1{\uparrow}g]}[x_i,y_i]\Pi_{j\in [1{\uparrow}p]}t_j,[\,\overline t_1], [\,\overline t_2],\ldots, [\,\overline t_p]\}$ is a $(g,p)$-surface word set of $\Sigma_{g,1,p}$. Its associated sequence is $$(\overline x_1, y_1, x_1, \overline y_1, \overline x_2, y_2, x_2, \overline y_2, \ldots, \overline x_g, y_g, x_g, \overline y_g, t_1, \overline t_1, t_2, \overline t_2, \ldots, t_p, \overline t_p).$$
We say that $\{\Pi_{i\in [1{\uparrow}g]}[x_i,y_i]\Pi_{j\in [1{\uparrow}p]}t_j,[\,\overline t_1], [\,\overline t_2],\ldots, [\,\overline t_p]\}$ is the standard $(g,p)$- surface word set of $\Sigma_{g,1,p}$.
\end{remark}

\begin{remark}\label{rem:hom_out} $\AM_{g,1,p} = \Hom(T,T)$, where $T$ is the standard $(g,p)$-surface word set of $\Sigma_{g,1,p}$.
\end{remark}

\begin{theorem}[McCool~\cite{McCool},\cite{DF97}]\label{t:mccool} $\G_{g,p}$ is generated by Nielsen elements.
\end{theorem}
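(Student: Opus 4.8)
The plan is to reduce the statement to the classical Whitehead/McCool result on automorphisms preserving a finite set of (cyclic) words, using the combinatorial reformulation in terms of associated sequences developed in the preceding remarks. The key observation is that every $(g,p)$-surface word set $T$ is determined by its associated sequence $a_{[1{\uparrow}2n]}$, which is nothing but a listing of $X\vee\overline X$; and conversely every listing of $X\vee\overline X$ in which no two consecutive terms are mutually inverse arises this way. So $\G_{g,p}$ can be identified with a groupoid whose objects are certain orderings of $X\vee\overline X$ and whose morphisms are the automorphisms of $F_n$ carrying one to another. I would first record this dictionary precisely: a type-$1$ Nielsen acts on an associated sequence by permuting its entries (Remark~\ref{rem:N-seq}\eqref{rem:N-seq_1}), while a type-$2$ Nielsen $(a_i\mapsto a_{i\mp1}a_i)$ acts by the elementary ``shift'' move described in Remark~\ref{rem:N-seq}\eqref{rem:N-seq_2}, moving a single symbol past its neighbour's inverse.

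Next I would set up the induction. Fix two $(g,p)$-surface word sets $T_1$, $T_2$ and $\phi\in\Hom(T_1,T_2)$; I want to factor $\phi$ as a product of Nielsen elements of $\G_{g,p}$. The standard approach is a peak-reduction / length argument: among all factorizations of $\phi$ through Nielsen automorphisms of $F_n$ (which exist since $\Aut(F_n)$ is generated by Nielsen automorphisms), and among all intermediate ``peaks'', one applies Whitehead's peak-reduction lemma to the finite set $T_1$ of cyclically reduced words (together with the one genuine word, which can be handled by the usual bordification trick of appending a free letter, or directly since the Whitehead-graph-is-a-segment condition is preserved). The content of McCool's theorem is precisely that the subgroup of $\Aut(F_n)$ stabilizing a finite set of conjugacy classes (here refined to fix the word and fix each cyclic word) is generated by those Whitehead automorphisms that stabilize it; combined with the surface-word-set structure, every such stabilizing Whitehead automorphism is seen to be a product of type-$1$ and type-$2$ Nielsen elements staying inside the groupoid. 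One then propagates this along a path of surface word sets connecting $T_1$ to $T_2$: since the Whitehead graph is always an oriented segment, the intermediate objects in a peak-reduced factorization are again $(g,p)$-surface word sets, so the factorization lives in $\G_{g,p}$.

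The technical heart — and the step I expect to be the main obstacle — is verifying that peak reduction can be carried out \emph{without leaving the class of surface word sets}: that is, that the intermediate sets of (cyclic) words produced by Whitehead's algorithm always have Whitehead graph an oriented segment, so that the factorization is a genuine path in $\G_{g,p}$ rather than merely a factorization in $\Aut(F_n)$. This requires analysing how a length-reducing Whitehead automorphism interacts with the ``initial vertex of exactly one edge, terminal vertex of exactly one edge'' condition, and showing that the only length-preserving moves needed to connect $T_1$ to $T_2$ at minimal total length are the elementary shifts of Remark~\ref{rem:N-seq}\eqref{rem:N-seq_2}. Since the associated sequence is just a word on the alphabet $X\vee\overline X$ with no $a\overline a$ factors, and the shifts generate the symmetric-group action on such sequences subject to that constraint, one finishes by an elementary combinatorial argument on sortings of sequences; the surface condition guarantees no obstruction arises. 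This is exactly the parallel with \cite[7.6 Corollary]{BD} and with McCool's original treatment \cite{McCool},\cite{DF97}, to which I would ultimately appeal for the peak-reduction input.
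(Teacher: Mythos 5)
The paper offers no proof of Theorem~\ref{t:mccool}: it is stated as a quoted result of McCool~\cite{McCool} and Dicks--Formanek~\cite{DF97}, and your outline follows exactly the peak-reduction strategy of those sources, correctly identifying the one genuinely hard step (that a peak-reduced Whitehead factorization can be kept inside the class of surface word sets) and then, like the paper itself, delegating that step to the same references. So your proposal is an acceptable account of the standard argument rather than an independent proof, which matches what the paper does.
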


\section{Ends of free group}
\label{sec:ends}
Let $n\coloneq 2g+p$ and $F_n$ is the free group on $X$, where $\abs{X} = n$.

\begin{notation}\label{not:basic} Let $a_{[1{\uparrow}k]}$
be the normal form for $w\in F_n$; in particular, $w= \Pi a_{[1{\uparrow}k]}$.
The set of elements of $F_n$ whose normal forms have $a_{[1{\uparrow}k]}$
 as an initial segment is denoted $(w{\star})$; and, the set of
elements of $F_n$ whose normal forms have $a_{[1{\uparrow}k]}$
 as a terminal segment is denoted  $({\star} w)$.  The elements of $(w{\star})$ are said to {\it begin with } $w$,
and the elements of $({\star} w)$  are said to {\it end with}~$w$.
\end{notation}

\begin{review}  An {\it end} of $F_n$ is a sequence
$a_{[1{\uparrow}\infty{[}}$ in $X \vee \overline X$
such that, for each $i \in \left[1{\uparrow}\infty\right[ $\,\,, $a_{i+1} \ne \overline a_i$.
We represent $a_{[1{\uparrow}\infty{[}}$ as a formal right-infinite reduced product,
$a_1a_2\cdots$ or $\Pi a_{[1{\uparrow}\infty{[}}$.

We denote the set of ends of $F_n$ by $\partial F_n$.

For each $w \in F_n$,
we define the {\it shadow} of $w$ in $\partial F_n$ to be
$$(w{\blacktriangleleft}) \coloneq \{a_{[1{\uparrow}\infty[} \in \partial F_n \mid
\text{ $\Pi a_{[1{\uparrow}\abs{w}\,]} = w$} \}.$$
Thus, for example, $(1{\blacktriangleleft}) = \partial F_n$.\medskip
\end{review}

\begin{definition}\label{def:ends}
Let $T$ be a surface word set. We now give  $\partial F_n$ an ordering, $<_T$, with respect to $T$ as follows. Let $a_{[1{\uparrow}2n]}$ be the associated sequence of $T$. Recall $a_{[1{\uparrow}2n]}$ is a listing of the elements of $X\vee \overline X$ . 
For each $w \in F_n$, we assign
an ordering, $<_T$, to a partition of $(w{\blacktriangleleft})$ into $2n$ or $2n-1$ subsets, depending
as $w=1$ or $w\ne 1$,  as follows.  We set
\begin{align*}
&(a_1{\blacktriangleleft}) <_T (a_2{\blacktriangleleft}) <_T
(a_3{\blacktriangleleft}) <_T \cdots
<_T (a_{2n-1}{\blacktriangleleft}) <_T (a_{2n}{\blacktriangleleft}).
\intertext{If $i \in [1{\uparrow}n]$ and
$w \in  ({\star}\overline a_i)$, then we set}
&(w a_{i+1}{\blacktriangleleft}) <_T  (w a_{i+2}{\blacktriangleleft})
 <_T (w a_{i+3}{\blacktriangleleft}) <_T \cdots\\
&\quad \cdots <_T(w a_{2n-1}{\blacktriangleleft}) <_T (w a_{2n}{\blacktriangleleft}) <_T
(w a_{1}{\blacktriangleleft}) <_T (w a_{2}{\blacktriangleleft}) <_T
(w a_3{\blacktriangleleft}) <_T \cdots \\
&\quad \cdots <_T (w a_{i-2}{\blacktriangleleft}) <_T (w a_{i-1}{\blacktriangleleft}) .
\end{align*}
Hence, for each $w \in F_n$, we have
an ordering $<_T$ of a partition of $(w{\blacktriangleleft})$ into $2n$ or $2n-1$ subsets.

If $ b_{[1{\uparrow}\infty[}$ and $ c_{[1{\uparrow}\infty[}$
are two different ends, then there exists $i \in \naturals$ such that
$b_{[1{\uparrow}i]} = c_{[1{\uparrow}i]}$ and
$b_{i+1} \ne c_{i+1}$.
Let $w = \Pi b_{[1{\uparrow}i]} = \Pi c_{[1{\uparrow}i]}$ in $F_n$. Then
$b_{[1{\uparrow}\infty[}$ and $c_{[1{\uparrow}\infty[}$ lie in $(w{\blacktriangleleft})$,
but lie in different elements of the partition of
$(w{\blacktriangleleft})$ into $2n$ or $2n-1$ subsets.
We then order $b_{[1{\uparrow}\infty[}$ and $c_{[1{\uparrow}\infty[}$ using the
order of the elements of the partition of
$(w{\blacktriangleleft})$ that they belong to.
This completes the definition of the ordering $<_T$ of $\partial F_n$.\medskip

Let $w$ be the non-cyclic element of $T$. We remark that in $(\partial F_n, <_T)$ the smallest element is $w^\infty$ and the largest element is
$\overline w^\infty$.

For example, in $F_4 = \langle a,b,c,d\mid \,\rangle$ we take $T = \{a\overline d\,\, \overline bc,[\overline a b],[\overline c d]\}$. The associated sequence of $T$ is $(a,b,c,d,\overline b, \overline a, \overline d, \overline c)$. In $(\partial F_4, <_{T})$, the smallest element is $(a\overline d\,\,\overline b c)^\infty$, and, the largest element is $(\overline c b d \overline a)^{\infty}$.
\end{definition}

\begin{notation}
We denote by $<$ the order on $\partial\Sigma_{g,1,p}$ with respect to the standard $(g,p)$-surface word set of $\Sigma_{g,1,p}$. 
\end{notation}

\begin{review} 
Let $\hat S$ be the universal cover of $S_{g,1,p}$. Suppose $S_{g,1,p}$ has negative Euler characteristic, that is, $2g+p\geq 2$. Then $\hat S$ can be identified with the hyperbolic plane. Let $\partial \hat S$ be the boundary of $\hat S$. It is well-known that $\partial \hat S$ can be identified with $\reals \vee \{\infty\}$. Let $\ast$ be the point in $\partial \hat S$ corresponding to $\infty$ by this identification.
The identification between $\partial \hat S$ and $\reals \vee \{\infty\}$ restricts to an identification between $\partial \hat S - \{\ast\}$ and $\reals$.
By work of Nielsen-Thurston~\cite{Cooper},~\cite{ShortWiest},
there is an action of $\M_{g,1,p}$ on $\partial \hat S$ with a fixed point, which we can suppose to be $\ast \in \partial \hat S$. Hence, an action of $\M_{g,1,p}$ on $\reals$. By~\cite{ShortWiest}, this action preserves the usual order of $\reals$. Remark~\ref{rem:hom_out} and Proposition~\ref{prop:order}
give the analog statement for $\AM_{g,1,p}$ and $\partial \Sigma_{g,1,p}$.
\medskip

Let $\phi \in \Aut(F_n)$. It is proved in~\cite{Cooper} that $(\Pi a_{[1{\uparrow}\infty[})^\phi = \lim_{k\to \infty} (\Pi a_{[1{\uparrow}k]})^\phi$ defines a map $\partial F_n \to \partial F_n$, which we still denote by $\phi$.

\begin{proposition}\label{prop:order}
Let $T_1,T_2$ be surface word sets of $F_n$ and $(T_1,T_2,\phi) \in \Hom(T_1,T_2)$. Then $\phi: (\partial F_n, \le_{T_1}) \to (\partial F_n, \le_{T_2})$ respects the orderings.
\end{proposition}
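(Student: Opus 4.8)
The plan is to reduce to the case where $(T_1,T_2,\phi)$ is a Nielsen element of $\G_{g,p}$, using Theorem~\ref{t:mccool} and the fact that "respecting the orderings" is closed under composition. Indeed, if $(T_1,T_2,\phi)$ and $(T_2,T_3,\psi)$ are morphisms in $\G_{g,p}$ with $\phi$ order-preserving from $(\partial F_n,<_{T_1})$ to $(\partial F_n,<_{T_2})$ and $\psi$ order-preserving from $(\partial F_n,<_{T_2})$ to $(\partial F_n,<_{T_3})$, then $\phi\psi$ is order-preserving from $(\partial F_n,<_{T_1})$ to $(\partial F_n,<_{T_3})$; and the identity morphism trivially preserves orderings. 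By Theorem~\ref{t:mccool}, every morphism of $\G_{g,p}$ is a product of Nielsen elements (and their inverses, which are again Nielsen elements up to relabeling), so it suffices to treat type-$1$ and type-$2$ Nielsen morphisms.

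First I would dispose of the type-$1$ case. If $\phi$ is a type-$1$ Nielsen automorphism, it permutes $X\vee\overline X$; by Remark~\ref{rem:N-seq}\eqref{rem:N-seq_1} the associated sequence $b_{[1{\uparrow}2n]}$ of $T_2$ is obtained from the associated sequence $a_{[1{\uparrow}2n]}$ of $T_1$ by applying $\phi$ coordinatewise. Since $\phi$ acts on ends by applying the permutation letterwise, $\phi$ carries the end $\Pi a_{[1{\uparrow}\infty[}$ to the end obtained by replacing each letter by its $\phi$-image, and it carries the shadow $(w{\blacktriangleleft})$ to $(w^\phi{\blacktriangleleft})$. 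Comparing the defining inequalities of $<_{T_1}$ and $<_{T_2}$ at each node $w$ versus $w^\phi$, one checks directly that the block $(wa_j{\blacktriangleleft})$ of the $<_{T_1}$-partition goes to the block $(w^\phi b_j{\blacktriangleleft})$ of the $<_{T_2}$-partition, in the same position, so order is preserved.

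The substance is the type-$2$ case, which I expect to be the main obstacle. Here $\phi=(a_i\mapsto a_{i-1}a_i)$ or $\phi=(\overline a_i\mapsto\overline a_i\overline a_{i+1})$, and by Remark~\ref{rem:N-seq}\eqref{rem:N-seq_2} the associated sequence of $T_2$ is obtained from that of $T_1$ by moving a single letter to an adjacent-but-one position. The difficulty is that $\phi$ no longer acts letterwise on ends: the action on $\partial F_n$ involves the limit $(\Pi a_{[1{\uparrow}\infty[})^\phi=\lim_k(\Pi a_{[1{\uparrow}k]})^\phi$ from \cite{Cooper}, and prefixes can grow or cancel. The plan is to fix two distinct ends $\beta=b_{[1{\uparrow}\infty[}$ and $\gamma=c_{[1{\uparrow}\infty[}$ with longest common prefix $w=\Pi b_{[1{\uparrow}m]}=\Pi c_{[1{\uparrow}m]}$, so $b_{m+1}\ne c_{m+1}$, and to track carefully what happens to $w$, to $b_{m+1}$, and to $c_{m+1}$ under $\phi$: write $w^\phi$, and analyze where the images of $\beta$ and $\gamma$ separate. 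There are finitely many combinatorial cases according to which letters among $a_{i-1},a_i,\overline a_i,a_{i+1}$ occur at positions $m$, $m+1$ of $\beta$ and $\gamma$ (i.e., whether the substitution triggers a cancellation in one branch, the other, or neither, and whether the new common prefix is longer or shorter than $w^\phi$). In each case I would compute the new common prefix $w'$ and the new separating letters $b'$, $c'$, then compare the position of $(w'b'{\blacktriangleleft})$ versus $(w'c'{\blacktriangleleft})$ in the $<_{T_2}$-partition against the position of $(wb_{m+1}{\blacktriangleleft})$ versus $(wc_{m+1}{\blacktriangleleft})$ in the $<_{T_1}$-partition, using the fact that moving one letter two places in the associated sequence only swaps the relative order of ends whose separation involves exactly that moved letter — and for those, the cancellation/extension produced by $\phi$ compensates, so the net comparison is unchanged. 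The bookkeeping is routine once the cases are laid out, but getting the case division exactly right, and verifying that the "move the letter past $\overline a_{i\mp1}$" rule of Remark~\ref{rem:N-seq}\eqref{rem:N-seq_2} matches the effect of $\phi$ on the limit of prefixes, is where the care is needed.
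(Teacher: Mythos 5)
Your plan follows essentially the same route as the paper: reduce to Nielsen morphisms via Theorem~\ref{t:mccool} (order-preservation being closed under composition), dispose of type-$1$ Nielsens by the letterwise-permutation argument, and handle type-$2$ Nielsens by tracking the longest common prefix and separating letters of two ends through the substitution, with a finite case analysis governed by which of $a_{i-1},a_i,\overline a_i,a_{i+1}$ occur at the boundary --- this is exactly what the paper's three correspondence tables and its $j<i-1$ versus $i-1<j$ subcases carry out, and your observation that only comparisons involving the moved letter are at risk, with the cancellation compensating, is the correct crux that the tables verify. The proposal is sound; it simply leaves the type-$2$ bookkeeping in outline, much as the paper itself works only the $(\star\,\overline a_i a_{i-1})$ row in full and declares the remaining rows similar.
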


\begin{proof}
By Theorem~\ref{t:mccool}, we can restrict ourselves to the case where $(T_1, T_2, \phi)$ is a Nielsen.

By Remark~\ref{rem:N-seq}\ref{rem:N-seq_1}, the result is clear if $(T_1, T_2, \phi)$ is a type-$1$ Nielsen. Hence, we suppose $(T_1, T_2, \phi)$ is a type-$2$ Nielsen.

Let $a_{[1{\uparrow}2n]}$ be the associated sequence of $T_1$. Then either $\phi=(a_i\mapsto a_{i-1}a_i)$ for some $i\in [2{\uparrow}2n],\, a_i \neq \overline a_{i-1}$, or, $\phi=(\overline a_i\mapsto \overline a_{i}\overline a_{i+1})$ for some $i\in [1{\uparrow}(2n-1)]$, $a_i \neq \overline a_{i+1}$.

Suppose $\phi=(a_i\mapsto a_{i-1}a_i)$ for some $i\in [2{\uparrow}2n],\, a_i \neq \overline a_{i-1}$.\medskip

The following correspondence by the action of $(a_i \mapsto a_{i-1}a_i)$ is clear.\medskip

\centerline{
\begin{tabular}
{
>{$}l<{$}
@{\hskip.5cm}
>{$}l<{$}
@{\hskip.5cm}
>{$}c<{$}
@{\hskip.5cm}
>{$}l<{$}
}
\setlength\extrarowheight{60pt}
& & (a_i \mapsto a_{i-1}a_i) &
\\[.08cm]& (\star\, \overline a_i a_{i-1}) & \longrightarrow & (\star\, \overline a_{i}),
\\[.08cm]& [(\star\, a_{i-1})-(\star\, \overline a_i a_{i-1})] & \longrightarrow & (\star\, a_{i-1}),
\\[.08cm] & (\star\, a_i) & \longrightarrow & (\star\, a_i),
\\[.08cm] & (\star\, a_{k}) & \longrightarrow & (\star\, a_{k}), \quad a_{k}\neq a_{i-1}^{\pm 1}, a_i^{\pm 1},
\\[.1cm] & (\star\, \overline a_{i-1}) & \longrightarrow & (\star\, \overline a_{i-1}),
\\[.08cm]& (\star\, \overline a_i) & \longrightarrow & (\star\,\overline a_i\overline a_{i-1}).
\end{tabular}
}

\bigskip

The following correspondence by the action of $(a_i \mapsto a_{i-1}a_i)$ is clear.\medskip

\centerline{
\begin{tabular}
{
>{$}l<{$}
@{\hskip.5cm}
>{$}l<{$}
@{\hskip.5cm}
>{$}c<{$}
@{\hskip.5cm}
>{$}l<{$}
}
\setlength\extrarowheight{60pt}
&  & (a_i \mapsto a_{i-1}a_i) & 
\\[.08cm]& (a_{i-1}\blacktriangleleft) & \longrightarrow & (a_{i-1}\blacktriangleleft) - (a_{i-1}a_i\blacktriangleleft),
\\[.08cm] & (a_i \blacktriangleleft) & \longrightarrow & (a_{i-1}a_i \blacktriangleleft),
\\[.08cm] & (a_{k}\blacktriangleleft) & \longrightarrow & (a_{k}\blacktriangleleft), \quad a_{k}\neq a_{i-1}^{\pm 1}, a_i^{\pm 1},
\\[.08cm] & (\overline a_{i-1} a_i\blacktriangleleft) & \longrightarrow & (a_i\blacktriangleleft),
\\[.08cm] & [(\overline a_{i-1}\blacktriangleleft)-(\overline a_{i-1} a_i\blacktriangleleft)] & \longrightarrow & (\overline a_{i-1}\blacktriangleleft),
\\[.08cm]& (\overline a_i \blacktriangleleft) & \longrightarrow & (\overline a_i\blacktriangleleft).
\end{tabular}}

\bigskip

From the first row of the first table and the second table we deduce the following table.
\medskip

\centerline{
\begin{tabular}
{
>{$}l<{$}
@{\hskip.5cm}
>{$}l<{$}
@{\hskip.5cm}
>{$}c<{$}
@{\hskip.5cm}
>{$}l<{$}
}
\setlength\extrarowheight{60pt}
&& (a_{i} \mapsto a_{i-1}a_i) &
\\[.08cm]& (\star\,  \overline a_i a_{i-1})(a_{i-1}\blacktriangleleft)  & \longrightarrow & (\star\, \overline a_{i} )[(a_{i-1}\blacktriangleleft)-(a_{i-1}a_i\blacktriangleleft)],
\\[.08cm]
& (\star\, \overline a_i a_{i-1})(a_{i}\blacktriangleleft)  & \longrightarrow & (\star\, \overline a_{i} )(a_{i-1}a_i\blacktriangleleft),
\\[.08cm]
& (\star\, \overline a_i a_{i-1})(a_{k}\blacktriangleleft)  & \longrightarrow & (\star\, \overline a_{i} )(a_{k}\blacktriangleleft), \quad a_{k} \neq a_{i-1}^{\pm 1}, a_i^{\pm 1},
\\[.08cm]
& (\star\, \overline a_i a_{i-1})(\overline a_{i}\blacktriangleleft)  & \longrightarrow & (\star\, \overline a_{i} )(\overline a_i\blacktriangleleft).
\end{tabular}}

\bigskip
Notice the cases $(\star \overline a_{i}a_{i-1})(\overline a_{i-1} a_i \blacktriangleleft)$ and $(\star \overline a_{i}a_{i-1})[(\overline a_{i-1} \blacktriangleleft)-(\overline a_{i-1} a_i \blacktriangleleft)]$ do not have to be considered since they are not in reduced form.\medskip

Let $\mathfrak{e},\mathfrak{f}\in \partial F_n$ such that $\mathfrak{e} = (w \overline a_{i}a_{i-1}) \mathfrak{e}',\, \mathfrak{f} = (w \overline a_{i}a_{i-1}) \mathfrak{f}'$ and the first letter of $\mathfrak{e}'$ is different from the first letter of $\mathfrak{f}'$. Let $j\in [1{\uparrow}2n]$ such that $a_j = \overline a_{i-1}$.  
By the third table, $\mathfrak{e}^{(a_i\mapsto a_{i-1}a_i)} = (u \overline a_{i}) \mathfrak{e}'',\, \mathfrak{f}^{(a_i\mapsto a_{i-1}a_i)} = (u \overline a_{i}) \mathfrak{f}''$ in reduced form. Let $b_{[1{\uparrow}2n]}$ be the associated sequence of $T_2$. Recall $b_{[1{\uparrow}2n]}$ is obtained from $a_{[1{\uparrow}2n]}$ by moving $a_i$ from immediately after $a_{i-1}$ to immediately before $a_j = \overline a_{i-1}$. There are two cases according to $j < i-1$ or $i-1 < j$.

If $j < i-1$, then 
$$\begin{array}{ll}
b_{[1{\uparrow}(j-1)]} & = a_{[1{\uparrow}(j-1)]},\\
b_j & = a_i,\\
b_{[(j+1){\uparrow}i]} & = a_{[j{\uparrow}(i-1)]},\\
b_{[(i+1){\uparrow}2n]} & = a_{[(i+1){\uparrow}2n]}.
\end{array}$$
The partition with respect to $a_{[1{\uparrow}2n]}$ of $(\overline a_{j}\blacktriangleleft) = (a_{i-1}\blacktriangleleft)$ is $(a_{j+1}\blacktriangleleft),$ \linebreak $(a_{j+2}\blacktriangleleft),\ldots, (a_{i-1}\blacktriangleleft)$,$(a_{i}\blacktriangleleft),(a_{i+1}\blacktriangleleft), \ldots, (a_{2n}\blacktriangleleft),(a_{1}\blacktriangleleft),(a_{2}\blacktriangleleft), \ldots ,(a_{j-1}\blacktriangleleft)$. The partition with respect to $b_{[1{\uparrow}2n]}$ of $(\overline a_{i}\blacktriangleleft)$ is $(a_{j}\blacktriangleleft),(a_{j+1}\blacktriangleleft), \ldots ,(a_{i-1}\blacktriangleleft)$, $(a_{i+1}\blacktriangleleft),(a_{i+2}\blacktriangleleft), \ldots ,(a_{2n}\blacktriangleleft),(a_{1}\blacktriangleleft),(a_{2}\blacktriangleleft), \ldots ,(a_{j-1}\blacktriangleleft)$. By the third table,
\medskip

\centerline{
\begin{tabular}
{
>{$}l<{$}
@{\hskip.5cm}
>{$}l<{$}
@{\hskip.5cm}
>{$}c<{$}
@{\hskip.5cm}
>{$}l<{$}
}
\setlength\extrarowheight{60pt}
&  & (a_i \mapsto a_{i-1}a_i) & 
\\[.08cm]& (w \overline a_i a_{i-1})(a_{j+1}\blacktriangleleft) & \longrightarrow & (u \overline a_i)(a_{j+1}\blacktriangleleft),
\\[.08cm]& (w \overline a_i a_{i-1})(a_{j+2}\blacktriangleleft) & \longrightarrow & (u \overline a_i)(a_{j+2}\blacktriangleleft),
\\[.03cm]&& \vdots
\\[.08cm]& (w \overline a_i a_{i-1})(a_{i-2}\blacktriangleleft) & \longrightarrow & (u \overline a_i)(a_{i-2}\blacktriangleleft),
\\[.08cm] & (w \overline a_i a_{i-1})(a_{i-1} \blacktriangleleft) & \longrightarrow & (u \overline a_i)[(a_{i-1} \blacktriangleleft)-(a_{i-1}a_i \blacktriangleleft)],
\\[.08cm] & (w \overline a_i a_{i-1})(a_i \blacktriangleleft) & \longrightarrow & (u \overline a_i)(a_{i-1}a_i \blacktriangleleft),
\\[.08cm] & (w \overline a_i a_{i-1})(a_{i+1}\blacktriangleleft) & \longrightarrow & (u \overline a_i)(a_{i+1}\blacktriangleleft),
\\[.03cm]&& \vdots
\\[.08cm] & (w \overline a_i a_{i-1})(a_{2n}\blacktriangleleft) & \longrightarrow & (u \overline a_i)(a_{2n}\blacktriangleleft),
\\[.08cm] & (w \overline a_i a_{i-1})(a_{1}\blacktriangleleft) & \longrightarrow & (u \overline a_i)(a_{1}\blacktriangleleft),
\\[.08cm] & (w \overline a_i a_{i-1})(a_{2}\blacktriangleleft) & \longrightarrow & (u \overline a_i)(a_{2}\blacktriangleleft),
\\[.03cm]&& \vdots
\\[.08cm] & (w \overline a_i a_{i-1})(a_{j-1}\blacktriangleleft) & \longrightarrow & (u \overline a_i)(a_{j-1}\blacktriangleleft).
\end{tabular}}

\bigskip
Since $a_j = \overline a_{i-1}$, the first column is ordered with respect to $T_1$. On the other hand, $a_j = \overline a_{i-1}$ implies that the partition of $(u\overline a_i)(a_{i-1} \blacktriangleleft)$ with respect to $T_2$ ends with $(u\overline a_i)(a_{i-1}a_i \blacktriangleleft)$. Then, the second column of this table is ordered with respect to $T_2$. Hence, if $(w \overline a_{i}a_{i-1}) \mathfrak{e}' <_{T_1} (w \overline a_{i}a_{i-1}) \mathfrak{f}'$ then $(u \overline a_{i}) \mathfrak{e}'' <_{T_2} (u \overline a_{i}) \mathfrak{f}''$.\medskip

If $i-1 < j$, then 
$$\begin{array}{ll}
b_{[1{\uparrow}(i-1)]} & = a_{[1{\uparrow}(i-1)]}\\
b_{[i{\uparrow}(j-2)]} & = a_{[(i+1){\uparrow}(j-1)]}\\
b_{j-1} & = a_i\\
b_{[j{\uparrow}2n]} & = a_{[j{\uparrow}2n]}
\end{array}$$
The partition with respect to $a_{[1{\uparrow}2n]}$ of $(\overline a_{j}\blacktriangleleft) = (a_{i-1}\blacktriangleleft)$ is $(a_{j+1}\blacktriangleleft),$ \linebreak $(a_{j+2}\blacktriangleleft),\ldots, (a_{2n}\blacktriangleleft)$,$(a_{1}\blacktriangleleft),(a_{2}\blacktriangleleft), \ldots, (a_{i-1}\blacktriangleleft),(a_{i}\blacktriangleleft),(a_{i+1}\blacktriangleleft), \ldots ,(a_{j-1}\blacktriangleleft)$. The partition with respect to $b_{[1{\uparrow}2n]}$ of $(\overline a_{i}\blacktriangleleft)$ is $(a_{j}\blacktriangleleft),(a_{j+1}\blacktriangleleft), \ldots ,(a_{2n}\blacktriangleleft)$, $(a_{1}\blacktriangleleft),(a_{2}\blacktriangleleft), \ldots ,(a_{i-1}\blacktriangleleft),(a_{i+1}\blacktriangleleft),(a_{i+2}\blacktriangleleft), \ldots ,(a_{j-1}\blacktriangleleft)$. By the third table,
\medskip

\centerline{
\begin{tabular}
{
>{$}l<{$}
@{\hskip.5cm}
>{$}l<{$}
@{\hskip.5cm}
>{$}c<{$}
@{\hskip.5cm}
>{$}l<{$}
}
\setlength\extrarowheight{60pt}
&  & (a_i \mapsto a_{i-1}a_i) & 
\\[.08cm]& (w \overline a_i a_{i-1})(a_{j+1}\blacktriangleleft) & \longrightarrow & (u \overline a_i)(a_{j+1}\blacktriangleleft),
\\[.08cm]& (w \overline a_i a_{i-1})(a_{j+2}\blacktriangleleft) & \longrightarrow & (u \overline a_i)(a_{j+2}\blacktriangleleft),
\\[.03cm]&& \vdots
\\[.08cm] & (w \overline a_i a_{i-1})(a_{2n} \blacktriangleleft) & \longrightarrow & (u \overline a_i)(a_{2n}\blacktriangleleft),
\\[.08cm] & (w \overline a_i a_{i-1})(a_1 \blacktriangleleft) & \longrightarrow & (u \overline a_i)(a_1 \blacktriangleleft),
\\[.08cm] & (w \overline a_i a_{i-1})(a_{2}\blacktriangleleft) & \longrightarrow & (u \overline a_i)(a_{2}\blacktriangleleft),
\\[.03cm]&& \vdots
\\[.08cm] & (w \overline a_i a_{i-1})(a_{i-2}\blacktriangleleft) & \longrightarrow & (u \overline a_i)(a_{i-2}\blacktriangleleft),
\\[.08cm] & (w \overline a_i a_{i-1})(a_{i-1}\blacktriangleleft) & \longrightarrow & (u \overline a_i)[(a_{i-1}\blacktriangleleft)-(a_{i-1}a_i\blacktriangleleft)],
\\[.08cm] & (w \overline a_i a_{i-1})(a_{i}\blacktriangleleft) & \longrightarrow & (u \overline a_i)(a_{i-1}a_i\blacktriangleleft),
\\[.08cm] & (w \overline a_i a_{i-1})(a_{i+1}\blacktriangleleft) & \longrightarrow & (u \overline a_i)(a_{i+1}\blacktriangleleft),
\\[.03cm]&& \vdots
\\[.08cm] & (w \overline a_i a_{i-1})(a_{j-1}\blacktriangleleft) & \longrightarrow & (u \overline a_i)(a_{j-1}\blacktriangleleft).
\end{tabular}}

\bigskip
Since $a_j = \overline a_{i-1}$, the first column is ordered with respect to $T_1$. On the other hand, $a_j = \overline a_{i-1}$ implies that the partition of $(u\overline a_i)(a_{i-1} \blacktriangleleft)$ with respect to $T_2$ ends with $(u\overline a_i)(a_{i-1}a_i \blacktriangleleft)$. Then, the second column of this table is ordered with respect to $T_2$. Hence, if $(w \overline a_{i}a_{i-1}) \mathfrak{e}' <_{T_1} (w \overline a_{i}a_{i-1}) \mathfrak{f}'$ then $(u \overline a_{i}) \mathfrak{e}'' <_{T_2} (u \overline a_{i}) \mathfrak{f}''$.\medskip

For every row of the first table, there is a case which needs to be considered. Similarly, in all these cases, it can be shown that if $\mathfrak{e} <_{T_1} \mathfrak{f}$, then $\mathfrak{e}^{(a_i \mapsto a_{i-1}a_i)} <_{T_2} \mathfrak{f}^{(a_i \mapsto a_{i-1}a_i)}$.\medskip

The case $\phi=(\overline a_i\mapsto \overline a_{i}\overline a_{i+1})$ for some $i\in [1{\uparrow}(2n-1)]$, $a_i \neq \overline a_{i+1}$, is similar.
\end{proof}

\end{review}


\section{$t$-squarefreeness}
\label{sec:free}

Recall $2g + p = n$ and $\Sigma_{g,1,p}$ is the free group on $x_{[1{\uparrow}g]}\vee y_{[1{\uparrow}g]}\vee t_{[1{\uparrow} p]}$.\medskip

The following definition extends Definition~\ref{def:t-lliure} to $\Sigma_{g,1,p} \cup \partial \Sigma_{g,1,p}$.

\begin{definition}\label{def:t-sqfree}
An element of $\Sigma_{g,1,p} \cup \partial \Sigma_{g,1,p}$ is said to be {\it $t$-squarefree}
if, in its reduced expression, no two consecutive terms in $t_{[1{\uparrow}p]}\vee \overline t_{[1{\uparrow}p]}$ are equal.
\end{definition}

\begin{notation}
In the standard surface word set, we denote $$\overline z_1 = \Pi_{i\in {[1{\uparrow}g]}} [x_i, y_i] \Pi t_{[1{\uparrow}p]}\quad \textrm{ and } \quad z_1 = \Pi \overline t_{[p{\downarrow}1]} \Pi_{i\in {[g{\downarrow}1]}} [y_i, x_i].$$

From the last comment of Definition~\ref{def:ends}, the smallest element of $(\partial \Sigma_{g,1,p}, <)$ is $\overline z_1^{\,\infty}$ and the largest element of $(\partial \Sigma_{g,1,p}, <)$ is $z_1^\infty$. We denote by $\min(\partial \Sigma_{g,1,p}) = \overline z_1^{\,\infty}$ and $\max(\partial \Sigma_{g,1,p}) = z_1^\infty$ these facts.\medskip

Given two ends $\mathfrak{e},\mathfrak{f} \in \partial \Sigma_{g,1,p}$, we write $$[\mathfrak{e} {\uparrow} \mathfrak{f}] \coloneq \{\mathfrak{g}  \in \partial \Sigma_{g,1,p} \mid \mathfrak{e} \leq \mathfrak{g} \leq \mathfrak{f}\}.$$
\end{notation}

\begin{lemma}\label{lemma_rep} Let $k_0\in [1{\uparrow} p],\, w\in \Sigma_{g,1,p}-(\star t_{k_0})-(\star \overline t_{k_0})$ and $i_0\in [1{\uparrow} g]$. Then, in $(\partial \Sigma_{g,1,p},\leq)$, the following hold:
\begin{enumerate}[(i).]
\item\label{lemma_rep_1} $wt_{k_0}\overline{w}(\overline{z}_1^{\infty})\leq wt_{k_0}((\Pi t_{[k_0{\uparrow} p]}\Pi_{i\in [1{\uparrow} g]} [x_i,y_i]\Pi t_{[1{\uparrow} (k_0-1)]})^\infty)=\min(wt_{k_0}t_{k_0}\blacktriangleleft)$;
\item\label{lemma_rep_2} $\max(wt_{k_0}t_{k_0}\blacktriangleleft)<\min(w\overline{t}_{k_0}\overline{t}_{k_0}\blacktriangleleft)$;
\item\label{lemma_rep_3} $\max(w\overline{t}_{k_0}\overline{t}_{k_0}\blacktriangleleft)=w\overline{t}_{k_0}((\Pi \overline{t}_{[k_0{\downarrow} 1]}\Pi_{i\in [g{\downarrow} 1]} [y_i,x_i]\Pi \overline{t}_{[p{\downarrow} (k_0+1)]})^\infty)\leq w\overline{t}_{k_0}\overline{w}(z_1^\infty)$;
\item\label{lemma_rep_4} $(wt_{k_0}t_{k_0}\blacktriangleleft)\cup(w\overline{t}_{k_0}\overline{t}_{k_0}\blacktriangleleft)\subseteq [wt_{k_0}\overline{w}(\overline{z}_1^\infty){\uparrow} w\overline{t}_{k_0}\overline{w}(z_1^\infty)]$;
\item\label{lemma_rep_5} If $2g+p\geq 3$, then one of the following holds:
\begin{enumerate}[(a)]
\item $\overline{t}_p(\overline{z}_1^\infty)>w\overline{t}_{k_0}\overline{w}(z_1^\infty)$;
\item $\overline{t}_p(\overline{z}_1^\infty)<wt_{k_0}\overline{w}(\overline{z}_1^\infty)$;
\end{enumerate}
and, hence, $\overline{t}_p(\overline{z}_1^\infty)\notin [wt_{k_0}\overline{w}(\overline{z}_1^\infty){\uparrow} w\overline{t}_{k_0}\overline{w}(z_1^{\infty})]$;
\item\label{lemma_rep_6} If $a\in\{x_{i_0},\,\overline{x}_{i_0},\,y_{i_0},\overline{y}_{i_0}\}$, then one of the following holds:
\begin{enumerate}[(a).]
\item $a(z_1^\infty)>w\overline{t}_{k_0}\overline{w}(z_1^\infty)$;
\item $a(z_1^\infty)<wt_{k_0}\overline{w}(\overline{z}_1^\infty)$;
\end{enumerate}
and, hence,  $a(z_1^\infty)\notin [wt_{k_0}\overline{w}(\overline{z}_1^\infty){\uparrow} w\overline{t}_{k_0}\overline{w}(z_1^{\infty})]$.
\end{enumerate}
\end{lemma}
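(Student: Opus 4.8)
The statement is a list of six comparisons between specific ends in the ordering $<$ on $\partial\Sigma_{g,1,p}$, all of them relative to a fixed ``base point'' $w\in\Sigma_{g,1,p}-(\star t_{k_0})-(\star\overline t_{k_0})$, so the plan is to reduce everything to the local description of $<$ around a word $v$ given in Definition~\ref{def:ends}, namely that the partition of $(v\blacktriangleleft)$ into the pieces $(va_{i+1}\blacktriangleleft)<(va_{i+2}\blacktriangleleft)<\cdots$ is read off, cyclically, from the associated sequence starting just after the inverse of the last letter of $v$. I will use throughout the explicit associated sequence of the standard $(g,p)$-surface word set, $(\overline x_1,y_1,x_1,\overline y_1,\ldots,\overline x_g,y_g,x_g,\overline y_g,t_1,\overline t_1,t_2,\overline t_2,\ldots,t_p,\overline t_p)$, and the recorded facts $\min(\partial\Sigma_{g,1,p})=\overline z_1^{\,\infty}$, $\max(\partial\Sigma_{g,1,p})=z_1^{\,\infty}$.

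First I would prove (i)--(iii) by a direct computation of the extreme ends of the sets $(wt_{k_0}t_{k_0}\blacktriangleleft)$ and $(w\overline t_{k_0}\overline t_{k_0}\blacktriangleleft)$. For (i): inside $(wt_{k_0}\blacktriangleleft)$ the pieces are ordered cyclically starting just after $\overline t_{k_0}$ in the associated sequence, so the \emph{smallest} piece whose second-from-last letter continuation begins $t_{k_0}$ again is $(wt_{k_0}t_{k_0}\blacktriangleleft)$ only if $t_{k_0}$ is the letter immediately following $\overline t_{k_0}$'s ``cyclic successor'' — one checks from the sequence that the minimum of $(wt_{k_0}t_{k_0}\blacktriangleleft)$ is obtained by appending to $wt_{k_0}t_{k_0}$ the infinite product that starts at the letter after $\overline t_{k_0}$ and runs cyclically, which is exactly $(\Pi t_{[k_0{\uparrow}p]}\Pi_{i}[x_i,y_i]\Pi t_{[1{\uparrow}(k_0-1)]})^{\infty}$ prefixed appropriately; and $wt_{k_0}\overline w(\overline z_1^{\,\infty})$ is an end lying in $(wt_{k_0}\blacktriangleleft)$ but in an earlier (or equal) piece, since after $wt_{k_0}$ the letter $\overline w$'s first letter is $\overline{(\text{last letter of }w)}$, which precedes $t_{k_0}$ in the relevant cyclic order (here the hypothesis $w\notin(\star t_{k_0})\cup(\star\overline t_{k_0})$ is used to guarantee $wt_{k_0}\overline w$ is reduced and to control that first letter). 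Statements (ii) and (iii) are dual/symmetric: (ii) says every end beginning $wt_{k_0}t_{k_0}$ precedes every end beginning $w\overline t_{k_0}\overline t_{k_0}$, which follows because within $(w\blacktriangleleft)$ the piece $(wt_{k_0}\blacktriangleleft)$ entirely precedes $(w\overline t_{k_0}\blacktriangleleft)$ — again read off the cyclic order of the associated sequence from the letter after $\overline{(\text{last letter of }w)}$, using that neither $t_{k_0}$ nor $\overline t_{k_0}$ equals that letter — and then the sub-pieces $(wt_{k_0}t_{k_0}\blacktriangleleft)\subseteq(wt_{k_0}\blacktriangleleft)$ and $(w\overline t_{k_0}\overline t_{k_0}\blacktriangleleft)\subseteq(w\overline t_{k_0}\blacktriangleleft)$ inherit this; (iii) identifies $\max(w\overline t_{k_0}\overline t_{k_0}\blacktriangleleft)$ by the mirror of the (i) computation and compares it with $w\overline t_{k_0}\overline w(z_1^{\,\infty})$. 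Part (iv) is then immediate: it merely asserts that the union of the two sets lies between the claimed lower and upper ends, and combining (i), (ii), (iii) with $\overline z_1^{\,\infty}=\min$, $z_1^{\,\infty}=\max$ and monotonicity of ``prepend $wt_{k_0}\overline w$'' (resp.\ $w\overline t_{k_0}\overline w$) on $\partial\Sigma_{g,1,p}$ gives the containment.

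For (v) and (vi) the point is that a \emph{specific} end — $\overline t_p(\overline z_1^{\,\infty})$ in (v), $a(z_1^{\,\infty})$ with $a\in\{x_{i_0}^{\pm1},y_{i_0}^{\pm1}\}$ in (vi) — sits outside the interval $[wt_{k_0}\overline w(\overline z_1^{\,\infty}){\uparrow}w\overline t_{k_0}\overline w(z_1^{\,\infty})]$. The interval is contained in $(w\blacktriangleleft)$ (both endpoints begin with $w$, since $wt_{k_0}\overline w$ and $w\overline t_{k_0}\overline w$ are reduced), so it suffices to compare first letters: if $\overline t_p$ (resp.\ $a$) differs from the first letter of $w$, then $\overline t_p(\overline z_1^{\,\infty})$ (resp.\ $a(z_1^{\,\infty})$) is not in $(w\blacktriangleleft)$ at all, and one just has to decide on which side of the interval it falls — this is where the dichotomy (a)/(b) comes from, and which side holds depends on the position of $\overline t_p$ (resp.\ $a$) relative to the first letter of $w$ in the cyclic order determined by the associated sequence. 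The genuinely delicate sub-case is when the first letter of $w$ happens to \emph{equal} $\overline t_p$ (resp.\ $a$); then the end does lie in $(w\blacktriangleleft)$ and one must descend further, using that $\overline t_p(\overline z_1^{\,\infty})=\overline t_p\overline t_{p-1}\overline t_p\overline t_{p-1}\cdots$ (the reduced form of $\overline z_1^{\,\infty}$ begins $\overline t_p\overline t_{p-1}\cdots$) cannot share a long prefix with $wt_{k_0}\overline w(\ldots)$ because $k_0\le p$ and the $t$-letters available in $\overline w$ after the initial $\overline t_p$ are constrained — this is exactly where the hypothesis $2g+p\ge3$ in (v) enters, ruling out the degenerate small-rank configuration where the interval could swallow $\overline t_p(\overline z_1^{\,\infty})$, and where in (vi) the distinctness of $i_0$'s generators from the $t$'s does the analogous job.

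\textbf{Main obstacle.} I expect the bookkeeping in (v) and (vi) to be the hard part: everything reduces to comparisons of first (and occasionally second) letters against a cyclic order, but one must carefully enumerate the cases according to (1) what the last letter of $w$ is — a generator $x_i^{\pm1},y_i^{\pm1}$, or some $t_j^{\pm1}$ with $j\ne k_0$ — and (2) what the first letter of $w$ is, and in the coincidence sub-cases chase the comparison one or two letters deeper while invoking $2g+p\ge3$ (resp.\ the presence of a genuine handle $i_0$) to exclude the bad low-complexity configuration. The cleanest way to organize this is to first establish once and for all the ``cyclic successor'' description of the partition of $(v\blacktriangleleft)$ for an arbitrary reduced $v$ (a one-line consequence of Definition~\ref{def:ends}), prove (i)--(iv) as corollaries, and then treat (v)--(vi) as two parallel case analyses, noting explicitly at each branch which hypothesis ($w\notin(\star t_{k_0})\cup(\star\overline t_{k_0})$, or $2g+p\ge3$, or $i_0\in[1{\uparrow}g]$) is being used.
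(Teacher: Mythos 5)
Your outline for parts (i)--(iv) follows essentially the same route as the paper: compute $\min(wt_{k_0}t_{k_0}\blacktriangleleft)$ and $\max(w\overline{t}_{k_0}\overline{t}_{k_0}\blacktriangleleft)$ from the cyclic-successor description of the partition, and compare them with $wt_{k_0}\overline{w}(\overline{z}_1^{\,\infty})$ and $w\overline{t}_{k_0}\overline{w}(z_1^{\,\infty})$ by looking at the first surviving letter of the reduced form of $\overline{w}(\overline{z}_1^{\,\infty})$, resp.\ $\overline{w}(z_1^{\,\infty})$; your ``or equal'' correctly covers the possibility that $\overline{w}$ cancels completely. That part is sound.

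The gap is in the decisive sub-case of (v) (and its analogue in (vi)). First, a factual slip: $\overline{z}_1^{\,\infty}$ does \emph{not} begin $\overline{t}_p\overline{t}_{p-1}\cdots$ --- that is $z_1^{\,\infty}$; one has $\overline{z}_1^{\,\infty}=\overline{x}_1\overline{y}_1x_1y_1\cdots t_1\cdots t_p\overline{x}_1\cdots$ (or $t_1t_2\cdots$ when $g=0$), so $\overline{t}_p(\overline{z}_1^{\,\infty})$ begins $\overline{t}_p\overline{x}_1$ (or $\overline{t}_pt_1$). Second, and more seriously, ``cannot share a long prefix'' is not the relevant criterion. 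When $w\in(\overline{t}_p{\star})-(\overline{t}_p\overline{t}_p{\star})$ one has $\overline{t}_p(\overline{z}_1^{\,\infty})=\min\bigl((\overline{t}_p\blacktriangleleft)-(\overline{t}_p\overline{t}_p\blacktriangleleft)\bigr)$, and $wt_{k_0}\overline{w}(\overline{z}_1^{\,\infty})$ lies in that very set, so the non-strict inequality $\overline{t}_p(\overline{z}_1^{\,\infty})\leq wt_{k_0}\overline{w}(\overline{z}_1^{\,\infty})$ is automatic and the \emph{entire} content of the sub-case is ruling out equality of the two ends. That amounts to proving $t_pwt_{k_0}\overline{w}\notin\langle\Pi_{i\in[1{\uparrow}g]}[x_i,y_i]\Pi t_{[1{\uparrow}p]}\rangle$, which the paper does by cyclically reducing (write $w=\overline{t}_pu$ with $u\notin(t_p{\star})$, so the word becomes $ut_{k_0}\overline{u}t_p$ in normal form) and checking it lies in neither the submonoid generated by $\overline{z}_1$ nor the one generated by $z_1$; the sub-case $u=1$ is precisely where $2g+p\geq 3$ is needed, since for $g=0$, $p=2$, $k_0=1$, $w=\overline{t}_2$ one gets $t_2wt_1\overline{w}=t_1t_2=\overline{z}_1$ and the two ends coincide. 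Your plan never isolates this ``not a power of the boundary word'' step, and the heuristic offered in its place would not detect that failure. Two smaller points: $w\in(\overline{t}_p{\star})$ does not put $\overline{t}_p(\overline{z}_1^{\,\infty})$ inside $(w\blacktriangleleft)$ (take $w\in(\overline{t}_p\overline{t}_p{\star})$), and $w=1$ needs separate treatment, so your dichotomy ``first letter of $w$ equals $\overline{t}_p$ or not'' must be refined into the cases $w=1$, $w\notin(\overline{t}_p{\star})\cup\{1\}$, $w\in(\overline{t}_p\overline{t}_p{\star})$, and $w\in(\overline{t}_p{\star})-(\overline{t}_p\overline{t}_p{\star})$; the corresponding refinement for (vi) requires the analogous statement $\overline{a}w\overline{t}_{k_0}\overline{w}\notin\langle z_1\rangle$ when $w\in(a\overline{t}_p{\star})$.
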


\begin{proof} 
Recall $<$ is the ordering with respect to sequence the $$(\overline{x}_1,y_1,x_1,\overline{y}_1,\overline{x}_2,y_2,x_2,\overline{y}_2,\cdots ,\overline{x}_g,y_g,x_g,\overline{y}_g,t_1,\overline{t}_1,t_2,\overline{t}_2,\cdots,t_p,\overline{t}_p).$$

(i). It is straightforward to see that 
$$wt_{k_0}((\Pi t_{[k_0{\uparrow} p]}\Pi_{i\in [1{\uparrow}g]} [x_i,y_i]\Pi t_{[1{\uparrow} (k_0-1)]})^\infty)=\min(wt_{k_0}t_{k_0} \blacktriangleleft).$$
Let $a\in X\vee \overline{X}$ be such that $\overline{w}((\Pi_{i\in [1{\uparrow}g]} [x_i,y_i]\Pi t_{[1{\uparrow} p]})^\infty)\in (a\blacktriangleleft)$. Note $a\neq \overline{t}_{k_0}$.
 
If $a\neq t_{k_0}$, then $(wt_{k_0}a\blacktriangleleft) < (wt_{k_0}t_{k_0}\blacktriangleleft)$, and we have
$$wt_{k_0}\overline{w}(\overline{z}_1^{\infty})=wt_{k_0}\overline{w}((\Pi_{i\in [1{\uparrow}g]} [x_i,y_i]\Pi t_{[1{\uparrow} p]})^{\infty})< \min(wt_{k_0}t_{k_0}).$$
If $a=t_{k_0}$, then $\overline{w}$ is completely canceled in $\overline{w}((\Pi_{i\in [1{\uparrow}g]} [x_i,y_i]\Pi t_{[1{\uparrow} p]})^\infty)$, and, moreover,
\begin{align*}
wt_{k_0}\overline{w}(\overline{z}_1^\infty) & =wt_{k_0}\overline{w}((\Pi_{i\in [1{\uparrow}g]} [x_i,y_i]\Pi t_{[1{\uparrow} p]})^\infty)\\
& =wt_{k_0}((\Pi t_{[k_0{\uparrow} p]}\Pi_{i\in [1{\uparrow}g]} [x_i,y_i]\Pi t_{[1{\uparrow} k_0-1]})^\infty)\\
& =\min(wt_{k_0}t_{k_0} \blacktriangleleft).
\end{align*}

(ii). It is clear.\medskip

(iii). It is straightforward to see that 
$$w\overline{t}_{k_0}((\Pi \overline{t}_{[k_0{\downarrow} 1]}\Pi_{i\in [g{\downarrow} 1]} [y_i,x_i]\Pi \overline{t}_{[p{\downarrow} (k_0-1)]})^\infty)=\max(w\overline{t}_{k_0}\overline{t}_{k_0} \blacktriangleleft).$$
Let $a\in X\vee \overline{X}$ be such that $\overline{w}((\Pi \overline{t}_{[p{\downarrow} 1]}\Pi_{i\in [g{\downarrow} 1]} [y_i,x_i])^\infty)\in (a\blacktriangleleft)$. Note $a\neq t_{k_0}$.

If $a\neq \overline{t}_{k_0}$, then $(w\overline{t}_{k_0}\overline{t}_{k_0}\blacktriangleleft)< (w\overline{t}_{k_0}a\blacktriangleleft),$ and we have
$$\max(w\overline{t}_{k_0}\overline{t}_{k_0}\blacktriangleleft)< w\overline{t}_{k_0}\overline{w}((\Pi \overline{t}_{[p{\downarrow} 1]}\Pi_{i\in [g{\downarrow} 1]} [y_i,x_i])^\infty)= w\overline{t}_{k_0}\overline{w}(z_1^\infty).$$
If $a=\overline{t}_{k_0}$, then $\overline{w}$ is completely canceled in $\overline{w}((\Pi  \overline{t}_{[p{\downarrow} 1]}\Pi_{i\in [g{\downarrow} 1]} [y_i,x_i])^\infty)$, and, moreover,
\begin{align*}
w\overline{t}_{k_0}\overline{w}(z_1^\infty) & =w\overline{t}_{k_0}\overline{w}((\Pi \overline{t}_{[p{\downarrow} 1]}\Pi_{i\in [g{\downarrow} 1]} [y_i,x_i])^\infty)\\
& =w\overline{t}_{k_0}((\Pi \overline{t}_{[k_0{\downarrow} 1]}\Pi_{i\in [g{\downarrow} 1]} [y_i,x_i]\Pi \overline{t}_{[p{\downarrow} (k_0+1)]})^\infty)\\
& =\max(w\overline{t}_{k_0}\overline{t}_{k_0} \blacktriangleleft).
\end{align*}\medskip

(iv). Follows from (i)-(iii).\medskip

(v). By (i)-(iii),
$$wt_{k_0}\overline{w}((\Pi_{i\in [1{\uparrow}g]} [x_i,y_i]\Pi t_{[1{\uparrow} p]})^\infty)< w\overline{t}_{k_0}\overline{w}((\Pi \overline{t}_{[p{\downarrow} 1]}\Pi_{i\in [g{\downarrow}1]} [y_i,x_i])^\infty).$$

\textbf{Case 1.} $w=1$. Since $(\overline{t}_p\overline{x}_1\blacktriangleleft)\cup (\overline{t}_pt_1 \blacktriangleleft) >(\overline{t}_{k_0}\overline{t}_{p}\blacktriangleleft)$, we see
$$\overline{t}_p(\overline{z}_1^\infty)=\overline{t}_p((\Pi_{i\in [1{\uparrow}g]} [x_i,y_i]\Pi t_{[1{\uparrow} p]})^\infty) >\overline{t}_{k_0}((\Pi \overline{t}_{[p{\downarrow} 1]}\Pi_{i\in [g{\downarrow} 1]} [y_i,x_i])^\infty)=\overline{t}_{k_0}(z_1^\infty).$$
Thus, (a) holds.\medskip

\textbf{Case 2.} $w\notin (\overline{t}_p\star)\cup \{1\}$. Since $(\overline{t}_p\blacktriangleleft) >(w\overline{t}_{k_0}\blacktriangleleft)$, we see
\[\begin{split}
\overline{t}_p(\overline{z}_1^\infty)= \overline{t}_p((\Pi_{i\in [1{\uparrow}g]} [x_i,y_i]\Pi t_{[1{\uparrow} p]})^\infty)\quad\quad\quad\quad\quad\quad\quad\quad\,\\
\quad\quad > w\overline{t}_{k_0}\overline{w}((\Pi \overline{t}_{[p{\downarrow} 1]}\Pi_{i\in [g{\downarrow} 1]} [y_i,x_i])^\infty)=w\overline{t}_{k_0}\overline{w}(z_1^\infty).
\end{split}
\]
Thus, (a) holds.\medskip

\textbf{Case 3.} $w\in (\overline{t}_p\overline{t}_p\star)$. Since $(\overline{t}_p\overline{x}_1\blacktriangleleft)\cup (\overline{t}_pt_1 \blacktriangleleft) >(w\overline{t}_{k_0}\blacktriangleleft)$, we see
\[\begin{split}
\overline{t}_p(\overline{z}_1^\infty)=\overline{t}_p((\Pi_{i\in [1{\uparrow}g]} [x_i,y_i]\Pi t_{[1{\uparrow} p]})^\infty)\quad\quad\quad\quad\quad\quad\quad\quad\,\\
> w\overline{t}_{k_0}\overline{w}((\Pi \overline{t}_{[p{\downarrow} 1]}\Pi_{i\in [g{\downarrow} 1]} [y_i,x_i])^\infty)=w\overline{t}_{k_0}\overline{w}(z_1^\infty).
\end{split}
\]
Thus, (a) holds.\medskip

\textbf{Case 4.} $w\in (\overline{t}_p\star)-(\overline{t}_p\overline{t}_p\star)$.

Here, $$w t_{k_0}\overline{w}(\overline z_1^\infty) = wt_{k_0}\overline{w}((\Pi_{[1{\uparrow}g]}[x_i,y_i] \Pi t_{[1{\uparrow} p]})^{\infty})\in (wt_{k_0}\blacktriangleleft)\subset (\overline{t}_p\blacktriangleleft)- (\overline{t}_p\overline{t}_p\blacktriangleleft).$$ Hence,
\[
\begin{split}
\overline{t}_p((\Pi_{i\in [1{\uparrow}g]} [x_i,y_i]\Pi t_{[1{\uparrow} p]})^\infty)=\min((\overline{t}_p\blacktriangleleft)-(\overline{t}_p\overline{t}_p\blacktriangleleft))\quad\quad\quad\,\,\,\,\,\\
 \leq wt_{k_0}\overline{w}((\Pi_{i\in [1{\uparrow}g]} [x_i,y_i]\Pi t_{[1{\uparrow} p]})^\infty).
\end{split}
\]
To prove (b) holds, it remains to show that $$\overline{t}_p((\Pi_{i\in [1{\uparrow}g]} [x_i,y_i]\Pi t_{[1{\uparrow} p]})^\infty)\neq wt_{k_0}\overline{w}((\Pi_{i\in [1{\uparrow}g]} [x_i,y_i]\Pi t_{[1{\uparrow} p]})^\infty),$$
that is, $(\Pi_{i\in [1{\uparrow}g]} [x_i,y_i]\Pi t_{[1{\uparrow} p]})^\infty\neq t_pwt_{k_0}\overline{w}((\Pi_{i\in [1{\uparrow}g]} [x_i,y_i]\Pi t_{[1{\uparrow} p]})^\infty)$, that is, \newline
$t_pwt_{k_0}\overline{w}\notin \gen{\Pi_{i\in [1{\uparrow}g]} [x_i,y_i]\Pi t_{[1{\uparrow} p]}}.$ We can write $w=\overline{t}_pu$ where $u\notin (t_p\star)$. Then \newline
$t_pwt_{k_0}\overline{w}=ut_{k_0}\overline{u}t_p$, in normal form. Thus it suffices to show $$ut_{k_0}\overline{u}t_p\notin \gen{\Pi_{i\in [1{\uparrow} g]} [x_i,y_i]\Pi t_{[1{\uparrow} p]}}.$$

If $u=1$, then $ut_{k_0}\overline{u}t_p\notin \gen{\Pi_{i\in [1{\uparrow}g]} [x_i,y_i]\Pi t_{[1{\uparrow} p]}}$, since $2g+p\geq 3$.\medskip

If $u\neq 1$, then $ut_{k_0}\overline{u}t_p\notin \gen{\Pi_{i\in [1{\uparrow}g]} [x_i,y_i]\Pi t_{[1{\uparrow} p]}}$, since $ut_{k_0}\overline{u}t_p$ does not lie in the submonoid of $\Sigma_{g,1,p}$ generated by $\Pi_{i\in [1{\uparrow}g]} [x_i,y_i]\Pi t_{[1{\uparrow} p]}$, nor in the submonoid generated by $\Pi \overline{t}_{[p{\downarrow} 1]}\Pi_{i\in [g{\downarrow} 1]} [y_i,x_i]$.\medskip

In all four cases (v) holds.\medskip

(vi). Let $a\in\{x_{i_0},\,\overline{x}_{i_0},\,y_{i_0},\,\overline{y}_{i_0}\}$.\medskip

\textbf{Case 1.} $w=1$. Since $(a\blacktriangleleft) <(t_{k_0}\blacktriangleleft)$, we see
$$a(z_1^{\infty}) = a((\Pi \overline{t}_{[p{\downarrow} 1]}\Pi_{i\in [g{\downarrow} 1]} [y_i,x_i])^\infty)< t_{k_0}((\Pi_{i\in [1{\uparrow}g]} [x_i,y_i]\Pi t_{[1{\uparrow} p]})^\infty) = t_{k_0} (\overline{z}_1^{\infty}).$$

Thus, (b) holds.\medskip

\textbf{Case 2.} $w\notin (a\star)\cup\{1\}$.\medskip

If $(a\blacktriangleleft)>(w\blacktriangleleft)$, then $(a\blacktriangleleft) >(w\blacktriangleleft)\supset(w\overline{t}_{k_0}\blacktriangleleft)$ and 
\[
\begin{split}
a(z_1^{\infty}) = a((\Pi \overline{t}_{[p{\downarrow} 1]}\Pi_{i\in [g{\downarrow} 1]} [y_i,x_i])^\infty)\quad\quad\quad\quad\quad\quad\quad\\
> w\overline{t}_{k_0}\overline{w}((\Pi \overline{t}_{[p{\downarrow} 1]}\Pi_{i\in [g{\downarrow} 1]} [y_i,x_i])^\infty) = w\overline{t}_{k_0}\overline{w} (z_1^{\infty}).
\end{split}
\]
Thus, (a) holds.\medskip

If $(a\blacktriangleleft)<(w\blacktriangleleft)$, then $(a\blacktriangleleft) <(w\blacktriangleleft)\supset(wt_{k_0}\blacktriangleleft)$ and 
\[
\begin{split}
a(z_1^{\infty}) = a((\Pi \overline{t}_{[p{\downarrow} 1]}\Pi_{i\in [g{\downarrow} 1]} [y_i,x_i])^\infty)\quad\quad\quad\quad\quad\quad\quad\quad\,\,\\
< wt_{k_0}\overline{w}((\Pi_{i\in [1{\uparrow}g]} [x_i,y_i]\Pi t_{[1{\uparrow} p]})^\infty) = wt_{k_0}\overline{w} (\overline{z}_1^{\infty}).
\end{split}\]
Thus, (b) holds.\medskip

\textbf{Case 3.} $w\in (a\overline{t}_p\star)$.

Since $a((\Pi \overline{t}_{[p{\downarrow} 1]}\Pi_{i\in [g{\downarrow} 1]} [y_i,x_i])^\infty)=\max(a\overline{t}_p\blacktriangleleft)$, we see 
\[\begin{split}
a(z_1^{\infty}) = a((\Pi \overline{t}_{[p{\downarrow} 1]}\Pi_{i\in [g{\downarrow} 1]} [y_i,x_i])^\infty)\quad\quad\quad\quad\quad\quad\quad\quad\,\,\,\\
\geq w\overline{t}_{k_0}\overline{w}((\Pi \overline{t}_{[p{\downarrow} 1]}\Pi_{i\in [g{\downarrow} 1]} [y_i,x_i])^\infty) =  w\overline{t}_{k_0}\overline{w} (z_1^{\infty}).
\end{split}
\]

To prove (a) holds, it remains to show that $$a((\Pi \overline{t}_{[p{\downarrow} 1]}\Pi_{i\in [g{\downarrow} 1]} [y_i,x_i])^\infty)\neq w\overline{t}_{k_0}\overline{w}((\Pi \overline{t}_{[p{\downarrow} 1]}\Pi_{i\in [g{\downarrow} 1]} [y_i,x_i])^\infty),$$ that is, $((\Pi \overline{t}_{[p{\downarrow} 1]}\Pi_{i\in [g{\downarrow} 1]} [y_i,x_i])^\infty)\neq \overline{a}w\overline{t}_{k_0}\overline{w}((\Pi \overline{t}_{[p{\downarrow} 1]}\Pi_{i\in [g{\downarrow} 1]} [y_i,x_i])^\infty)$, that is\newline 
$\overline{a}w\overline{t}_{k_0}\overline{w}\notin\gen{\Pi \overline{t}_{[p{\downarrow} 1]}\Pi_{i\in [g{\downarrow} 1]} [y_i,x_i]n}$. We can write $w=a\overline{t}_pu$ where $u\notin (t_p\star)$. Then $\overline{a}w\overline{t}_{k_0}\overline{w}=\overline{t}_pu\overline{t}_{k_0}\overline{u}t_p\overline{a}$, in normal form. Thus it suffices to show that $$\overline{t}_pu\overline{t}_{k_0}\overline{u}t_p\overline{a}\notin\gen{\Pi \overline{t}_{[p{\downarrow} 1]}\Pi_{i\in [g{\downarrow} 1]} [y_i,x_i]},$$ which is clear since $\overline{t}_pu\overline{t}_{k_0}\overline{u}t_p\overline{a}$ does not lie in the submonoid of $\Sigma_{g,1,p}$ generated by
$\Pi \overline{t}_{[p{\downarrow} 1]}\Pi_{i\in [g{\downarrow} 1]} [y_i,x_i]$, nor in the submonoid generated by $\Pi_{i\in [1{\uparrow}g]} [x_i,y_i]\Pi t_{[1{\uparrow} p]}$.\medskip

\textbf{Case 4.} $w\in (a\star)-(a\overline{t}_p\star),\, \abs{w}\geq 2$.

If $(a\overline{t}_p\blacktriangleleft)>(w\blacktriangleleft)$, then $(a\overline{t}_p\blacktriangleleft) >(w\blacktriangleleft)\supset(w\overline{t}_{k_0}\blacktriangleleft)$ and
\[
\begin{split}
a(z_1^{\infty}) = a((\Pi \overline{t}_{[p{\downarrow} 1]}\Pi_{i\in [g{\downarrow} 1]} [y_i,x_i])^\infty)\quad\quad\quad\quad\quad\quad\quad\\
> w\overline{t}_{k_0}\overline{w}((\Pi \overline{t}_{[p{\downarrow} 1]}\Pi_{i\in [g{\downarrow} 1]} [y_i,x_i])^\infty) = w\overline{t}_{k_0}\overline{w}(z_1^{\infty}).
\end{split}
\]
Thus, (a) holds.\medskip

If $(a\overline{t}_p\blacktriangleleft)<(w\blacktriangleleft)$, then $(a\overline{t}_p\blacktriangleleft) <(w\blacktriangleleft)\supset(wt_{k_0}\blacktriangleleft)$ and 
\[
\begin{split}
a(z_1^{\infty}) = a((\Pi \overline{t}_{[p{\downarrow} 1]}\Pi_{i\in [g{\downarrow} 1]} [y_i,x_i])^\infty)\quad\quad\quad\quad\quad\quad\quad\quad\,\,\,\\
< wt_{k_0}\overline{w}((\Pi_{i\in [1{\uparrow}g]} [x_i,y_i]\Pi t_{[1{\uparrow} p]})^\infty)  = wt_{k_0}\overline{w}(\overline{z}_1^{\infty}).
\end{split}
\]
Thus, (b) holds.\medskip

\textbf{Case 5.} $w=a$.

Since $a(z_1^\infty) = \max(a\overline t_p\blacktriangleleft),\, (a\overline t_p\blacktriangleleft) \supset (a\overline t_p \overline y_g \overline x_g \blacktriangleleft )$ and $(a\overline t_p \overline y_g \overline x_g \blacktriangleleft) > (a\overline t_{k_0} \overline a \overline t_p \blacktriangleleft)$, we see 
\[
\begin{split}
a(z_1^{\infty}) = a((\Pi \overline{t}_{[p{\downarrow} 1]}\Pi_{i\in [g{\downarrow} 1]} [y_i,x_i])^\infty)\quad\quad\quad\quad\quad\quad\quad\\
> w\overline{t}_{k_0}\overline{w}((\Pi \overline{t}_{[p{\downarrow} 1]}\Pi_{i\in [g{\downarrow} 1]} [y_i,x_i])^\infty) = w\overline{t}_{k_0}\overline{w}(z_1^{\infty}).
\end{split}
\]
Thus, (a) holds.\medskip

In all five cases (vi) holds.
\end{proof}

\begin{theorem}~\label{th:f_sq} If $2g+p \geq 3$ then, for each $\phi\in \AM_{g,1,p}$,
\begin{enumerate}[(i).]
\item\label{th:f_sq_1} $\overline{t}_p^\phi(\overline{z}_1^\infty)$ is a $t$\d1squarefree end,
\item\label{th:f_sq_2} for every $i_0\in [1{\uparrow}g]$ and every $a\in\{x_{i_0},\overline{x}_{i_0},y_{i_0},\overline{y}_{i_0}\}$, $a^\phi(z_1^\infty)$ is a $t$\d1squarefree end.
\end{enumerate}
\end{theorem}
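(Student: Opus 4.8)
The plan is to argue by contradiction, using that $\phi$ acts on $(\partial\Sigma_{g,1,p},<)$ as an order-preserving bijection. Since $\AM_{g,1,p}=\Hom(T,T)$ for $T$ the standard $(g,p)$-surface word set (Remark~\ref{rem:hom_out}), Proposition~\ref{prop:order} applies to both $\phi$ and $\phi^{-1}$; in particular each of them fixes the least element $\overline{z}_1^\infty$ and the greatest element $z_1^\infty$ of $(\partial\Sigma_{g,1,p},<)$, and each of them respects the set $\{[\overline t_k]\}_{k\in[1{\uparrow}p]}$. Moreover, straight from the definition of the action on ends one has $(v\mathfrak f)^\psi=v^\psi(\mathfrak f^\psi)$ for every $\psi\in\AM_{g,1,p}$, $v\in\Sigma_{g,1,p}$ and $\mathfrak f\in\partial\Sigma_{g,1,p}$; hence $\overline t_p^\phi(\overline z_1^\infty)=(\overline t_p(\overline z_1^\infty))^\phi$ and $a^\phi(z_1^\infty)=(a(z_1^\infty))^\phi$, and applying $\phi^{-1}$ recovers $\overline t_p(\overline z_1^\infty)$, resp. $a(z_1^\infty)$.

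First I would record an elementary combinatorial fact: \emph{every non-$t$-squarefree end of $\Sigma_{g,1,p}$ lies in an interval $I_{k_0,w}\coloneq[wt_{k_0}\overline w(\overline z_1^\infty){\uparrow}w\overline t_{k_0}\overline w(z_1^\infty)]$ for some $k_0\in[1{\uparrow}p]$ and some $w\in\Sigma_{g,1,p}-(\star t_{k_0})-(\star\overline t_{k_0})$.} Given such an end, take the first position of its reduced expression at which two consecutive letters lie in $t_{[1{\uparrow}p]}\vee\overline t_{[1{\uparrow}p]}$ and coincide, say with common value $t_{k_0}^{\pm1}$, and let $w$ be the preceding prefix; minimality of the position together with reducedness forces $w\notin(\star t_{k_0})\cup(\star\overline t_{k_0})$, so the end lies in $(wt_{k_0}t_{k_0}\blacktriangleleft)\cup(w\overline t_{k_0}\overline t_{k_0}\blacktriangleleft)\subseteq I_{k_0,w}$, the last inclusion being Lemma~\ref{lemma_rep}\ref{lemma_rep_4}.

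For \ref{th:f_sq_1}, suppose $\overline t_p^\phi(\overline z_1^\infty)$ is not $t$-squarefree; then it lies in some $I_{k_0,w_0}$. Applying the order-isomorphism $\phi^{-1}$ sends the left end to $\overline t_p(\overline z_1^\infty)$ and, using that $\phi^{-1}$ fixes $\overline z_1^\infty$ and $z_1^\infty$, sends $I_{k_0,w_0}$ to the interval with endpoints $(w_0t_{k_0}\overline w_0)^{\phi^{-1}}(\overline z_1^\infty)$ and $(w_0\overline t_{k_0}\overline w_0)^{\phi^{-1}}(z_1^\infty)$. The element $(w_0t_{k_0}\overline w_0)^{\phi^{-1}}$ is a conjugate of $t_{k_0}^{\phi^{-1}}$, which is a conjugate of some $t_{k_1}$ with $k_1\in[1{\uparrow}p]$ since $\phi^{-1}$ respects $\{[\overline t_k]\}_k$; normalizing, $(w_0t_{k_0}\overline w_0)^{\phi^{-1}}=w_2t_{k_1}\overline w_2$ with $w_2\notin(\star t_{k_1})\cup(\star\overline t_{k_1})$, whence $(w_0\overline t_{k_0}\overline w_0)^{\phi^{-1}}=w_2\overline t_{k_1}\overline w_2$ and the image interval is exactly $I_{k_1,w_2}$. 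Thus $\overline t_p(\overline z_1^\infty)\in I_{k_1,w_2}$, contradicting Lemma~\ref{lemma_rep}\ref{lemma_rep_5} (which is where $2g+p\geq3$ enters). Part \ref{th:f_sq_2} is the same argument with $a^\phi(z_1^\infty)$ in place of $\overline t_p^\phi(\overline z_1^\infty)$: one obtains $a(z_1^\infty)\in I_{k_1,w_2}$, contradicting Lemma~\ref{lemma_rep}\ref{lemma_rep_6} for the given $i_0\in[1{\uparrow}g]$.

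The substantive content is entirely in Lemma~\ref{lemma_rep}, which is already available; granting it, the rest is bookkeeping. The step I expect to need the most care is the normalization $(w_0t_{k_0}\overline w_0)^{\phi^{-1}}=w_2t_{k_1}\overline w_2$ — checking that an arbitrary conjugate of a generator $t_{k_1}$ has a reduced expression of this shape with $w_2$ not ending in $t_{k_1}^{\pm1}$ (obtained by repeatedly absorbing a terminal $t_{k_1}^{\pm1}$ of the conjugator) — together with the verification that $\phi^{-1}$ carries $I_{k_0,w_0}$ onto $I_{k_1,w_2}$ precisely, so that Lemma~\ref{lemma_rep}\ref{lemma_rep_5} and \ref{lemma_rep_6} apply verbatim.
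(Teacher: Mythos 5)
Your proposal is correct and is essentially the paper's own argument run in the contrapositive: the paper shows directly that $\overline t_p(\overline z_1^\infty)$ (resp.\ $a(z_1^\infty)$) avoids the $\phi$-invariant union $\bigcup_{k,w}[wt_k\overline w(\overline z_1^\infty){\uparrow}w\overline t_k\overline w(z_1^\infty)]$, which by Lemma~\ref{lemma_rep}\eqref{lemma_rep_4} contains all non-$t$-squarefree ends, whereas you pull a hypothetical bad interval back through $\phi^{-1}$ to contradict Lemma~\ref{lemma_rep}\eqref{lemma_rep_5}--\eqref{lemma_rep_6}. The ingredients (Proposition~\ref{prop:order}, the permutation of $\bigcup_k[t_k]$, the fixing of $\overline z_1^\infty$ and $z_1^\infty$, and the normalization of conjugates of the $t_k$) are the same, so no substantive difference.
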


\begin{proof}
(i).
Recall $\overline{z}_1=\Pi_{i\in [1{\uparrow}g]} [x_i,y_i]\Pi t_{[1{\uparrow} p]}$ and $z_1=\Pi \overline{t}_{[p{\downarrow} 1]}\Pi_{i\in [g{\downarrow} 1]} [y_i,x_i]$. Let us $\cup[t]_{[1{\uparrow} p]}$ denote $\bigcup_{k\in[1{\uparrow} p]}[t_k]$. By Lemma~\ref{lemma_rep}\eqref{lemma_rep_5}, $\overline{t}_p(\overline{z}_1^\infty)$ does not lie in $$\bigcup_{u\in\cup[t]_{[1{\uparrow} p]}}[(u(\overline{z}_1^\infty)){\uparrow} (\overline{u}(z_1^{\infty}))](=\bigcup_{k=1}^p\bigcup_{w\in \Sigma_{g,1,p}-(\star t_k)-(\star \overline{t}_k)}[ (wt_k\overline{w}(\overline{z}_1^\infty)){\uparrow} (w\overline{t}_k\overline{w}(z_1^\infty))]).$$

Notice that $\phi$ permutes the elements of each of the following sets:\medskip

$$\cup[t]_{[1{\uparrow} p]};\quad\quad \{\overline{z}_1^\infty\};\quad\quad \{z_1^\infty\};\quad\quad \textrm{and}\quad\quad \bigcup_{u\in[t]_{[1{\uparrow} p]}}[u(\overline{z}_1^\infty){\uparrow} \overline{u}(z_1^\infty)].$$

Hence, $(\overline{t}_p(\overline{z}_1^\infty))^\phi$ does not lie in $\bigcup_{u\in [t]_{[1{\uparrow} p]}}[u(\overline{z}_1^\infty){\uparrow} \overline{u}(z_1^\infty)]$. By Lemma \ref{lemma_rep}\eqref{lemma_rep_4}, $$\bigcup_{u\in[t]_{[1{\uparrow} p]}}[u(\overline{z}_1^\infty){\uparrow} \overline{u}(z_1^\infty)]\supseteq \bigcup_{k=1}^p\bigcup_{w\in\Sigma_{g,1,p}-(\star t_k)-(\star \overline{t}_k)}((wt_kt_k\blacktriangleleft)\cup(w\overline{t}_k\overline{t}_k\blacktriangleleft)).$$

Hence, $(\overline{t}_p(\overline{z}_1^\infty))^\phi$ does not lie in the right-hand side set either, and, hence, $(\overline{t}_p(\overline{z}_1^\infty))^\phi$ is a $t$\d1squarefree end. Since $(\overline{t}_p(\overline{z}_1^\infty))^\phi=\overline{t}_p^\phi(\overline{z}_1^\infty)$, the desired result holds.
\medskip

(ii). The same proof as (i) using Lemma \ref{lemma_rep}\eqref{lemma_rep_6} instead of Lemma \ref{lemma_rep}\eqref{lemma_rep_5}.
\end{proof}

\begin{proof}{\it (of Theorem~\ref{thm:t-sqfree})} Recall \eqref{eq:trena}.
$\AM_{0,1,2}=\gen{\sigma_1}$, and $$t_2^{\AM_{0,1,2}}=\{t_2^{\sigma_1^{2m}},t_2^{\sigma_1^{2m+1}}\mid m\in \integers\}=\{t_2^{(t_1t_2)^m},t_1^{(t_1t_2)^m}\mid m\in \integers\}$$

Thus, every element of $t_2^{\AM_{0,1,2}}$ is $t$\d1squarefree.\medskip

Suppose, now, $2g+p\geq 3$. Let $i_0\in[1{\uparrow}g]$ and $a\in\{x_{i_0},y_{i_0}\}$. By Theorem~\ref{th:f_sq}\eqref{th:f_sq_2}, $a^\phi(z_1^\infty)=a^\phi((\Pi \overline{t}_{[p{\downarrow} 1]}\Pi [x,y]_{[g{\downarrow} 1]})^\infty)	$ is a $t$\d1squarefree end. Hence, either $a^\phi$ is $t$\d1squarefree or $a^\phi=ut_kt_kv$ in normal form, and $t_kv$ is canceled in $a^\phi(z_1^\infty) = ut_kt_kv(z_1^\infty)$; moreover $ut_k,t_kv$ are $t$\d1squarefree. By Theorem~\ref{th:f_sq}\eqref{th:f_sq_2}, $$\overline{a}^\phi(z_1^\infty) = \overline{a}^\phi((\Pi \overline{t}_{[p{\downarrow} 1]}\Pi [x,y]_{[g{\downarrow} 1]})^\infty)$$ is a $t$\d1squarefree end. Hence, $\overline{a}^\phi\neq \overline{v}\overline{t}_k\overline{t}_k\overline{u}$.\medskip

Since $\phi$ permutes $\bigcup_{k\in[1{\uparrow} p]}[t_k]$, we can write $\overline{t}_p^\phi=\overline{t}^{w_p}_{p^\pi}$, where $\pi$ is a permutation of $[1{\uparrow}p]$ and $w_p\in \Sigma_{g,1,p} - (t_{p^\pi} \star) - (\overline t_{p^\pi} \star)$. It is not difficult to see that
$$\overline{t}_p^\phi(\overline{z}_1^{\infty})=\overline{w}_p\overline{t}_{p^\pi}w_p((\Pi_{i\in [1{\uparrow}g]} [x_i,y_i]\Pi t_{[1{\uparrow} p]})^\infty)\in (\overline{w}_p\blacktriangleleft).$$

By Theorem~\ref{th:f_sq}\eqref{th:f_sq_1}, $\overline{t}_p^\phi(\overline{z}_1^\infty)$ is a $t$\d1squarefree end. Hence, $\overline{w}_p$ is $t$\d1squarefree.\medskip

Since $\overline{w}_p$ is $t$\d1squarefree, $\overline{t}_p^\phi=\overline{w}_p\overline{t}_pw_p$ is also $t$\d1squarefree. Hence, $t_p^\phi$ is $t$\d1squarefree. \medskip

Suppose, now, $2g+p\geq 2$. Let $k\in [1{\uparrow}p]$. Since $t_k$ is in the $\AM_{g,1,p}$\d1orbit of $t_p$, $t_k^\phi$ is $t$\d1squarefree for all $\phi \in \AM_{g,1,p}$.
\end{proof}

\noindent{\textbf{{Acknowledgments}}}

\medskip
\footnotesize

The autor is grateful to Warren Dicks and Luis Paris for many interesting observations.

\vskip -.5cm\null

\bibliographystyle{amsplain}

\begin{thebibliography}{30}
\vskip-0.6cm \null
\bibitem{Aramayona}
Javier Aramayona, Christopher J. Leininger, Juan Souto
\newblock{\em Injections of mapping class groups}
\newblock{ Geom.\ and Top.\ \textbf{13}(2009), 2523--2541.}
\vskip-0.6cm \null
\bibitem{ACampo}
Norbert A'Campo,
\newblock{\em Le groupe de monodromie du d\'eploiement des singularit\'es isol\'ees de courbes planes I.}
\newblock{ Math.\ Ann.\ \textbf{213}(1975), 1--32.}
\vskip-0.6cm \null
\bibitem{BD}
Llu\'{\i}s Bacardit and Warren Dicks,
\newblock{\em Actions of the braid group, and new algebraic proofs of results of Dehornoy and Larue,\/}
\newblock{Groups -- Complexity -- Cryptology,\ \textbf{1}(2009), 77 -- 129}
\bibitem{BirmanHilden}
Joan S.\ Birman and Hugh M.\ Hilden,
\newblock{\em On isotopies of homeomorphisms of Riemann sufaces\/},
\newblock  Ann.\ of Math.\ \textbf{97}(1973), 424--439.
\vskip-0.6cm \null
\bibitem{Cooper}
Daryl Cooper,
\newblock{\em  Automorphisms of free groups have finitely generated fixed point sets\/},
\newblock J.algebra \textbf{111}(1987), 453--456.
\vskip-0.6cm \null
\bibitem{CrispParis1}
John Crisp and Luis Paris,
\newblock{\em Arin groups of type $B$ and $D\!,$\/}
\newblock Adv.\ Geom.\ \textbf{5}(2005), 607--636.
\vskip-0.6cm \null
\bibitem{CrispParis2}
John Crisp and Luis Paris,
\newblock{\em Representations of the braid group by automorphisms
of groups, invariants of links, and Gar side groups\/},
\newblock Pacific J.\ Math.\ \textbf{221}(2005), 1--27.
\vskip-0.6cm \null
\bibitem{DDRW02}
Patrick Dehornoy, Ivan Baryshnikov, Dale Rolf sen and Bert West,
\newblock {\em Why are braids order able?\/},
\newblock Panoramas et Syntheses \textbf{14}, Soc.\ Math.\ France, Paris, 2002.
\vskip-0.6cm\null
\bibitem{DF97}
Warren Dicks and Edward Formanek,
\newblock{\em Automorphism subgroups of finite
index in algebraic mapping class groups\/},
\newblock {J.\ Algebra\ \textbf{189}(1997), 58--89.}
\vskip-0.6cm\null
\bibitem{DF05}
Warren Dicks and Edward Formanek,
\newblock{\em Algebraic mapping-class groups of orientable surfaces with boundaries\/},
pp. 57--116, in: {\em Infinite groups: geometric, combinatorial and dynamical aspects}
        (eds. Laurent Bartholdi, Tulley Cherin-Silverstein, Tatiana Smirnoff-Magnified, Andrej Auk),
        Progress in Mathematics \textbf{248},  Birkenstock Verlag, Basel, 2005.
\newline Errata and addenda at: \texttt{http://mat.ab.cat/$\scriptstyle\sim$dicks/Boundaries.HTML}
\vskip-0.6cm \null
 \bibitem{FM}
B. Farb and D. Margalit, 
\newblock{\em A primer book on mapping class groups\/},
\newline \texttt{http://www.mines.edu/fs$\underline{\hskip6pt}$home/dlarue/papers/dml.pdf}
\vskip-0.6cm \null
\bibitem{Ivanov}
Nikolai V. Ivanov,
\newblock {\em Mapping class groups\/},   
\newblock {p. 523 -- 633 in Handbook of geometric topology, North-Holland, Amsterdam, 2002.}
\vskip-0.6cm \null
 \bibitem{LyndonSchupp}
Roger C.\ Lyndon and Paul E.\ Schuss,
\newblock {\em Combinatorial group theory\/},
\newblock {Berget.\ Math.\ Grenada.\ \textbf{89},  Springer-Verlag, Berlin, 1977.}
\vskip-0.6cm \null
 \bibitem{Magnuson}
Wilhelm Magus,
\newblock{\em \"{U}Ber Automorphism en Von Fundamentalist Bernadette Flew\"{a}Cheng\/},
\newblock{Math.\ Ann.\ \textbf{109}(1934), 617--646.}
\vskip-0.6cm \null
\bibitem{McCool}
James McCool,
\newblock{\em Generating the mapping class group (an algebraic approach)\/},
\newblock{Pu bl. Mat.
 \textbf{40}(1996/02), 457–468.}
\vskip-0.6cm \null
 \bibitem{ShortWiest}
 Hamish Short and Bert Wiest,
\newblock{\em Orderings of mapping class groups after Thurston\/},
\newblock{Ensign.\ Math.\ \textbf{46}(2000), 279--312.}
\vskip-0.6cm \null
\end{thebibliography}

\medskip

\noindent\textsc{Llu\'is Bacardit,\newline
Institut de Mathématiques de Bourgogne\newline
Universit\'e de Bourgogne\newline
UMR 5584 du CNRS, BP 47870\newline
21078 Dijon Cedex\newline
France}

\medskip

\noindent \emph{E-mail address}{:\;\;}\texttt{Lluis.Bacardit@u-bourgogne.fr}

\end{document}